\documentclass[9pt,reqno]{amsart}

\usepackage{amsmath, amsfonts, amssymb, latexsym, amsthm}
\usepackage[pagewise]{lineno}
\usepackage{amsmath, amsfonts, amssymb, latexsym}
\usepackage[numbers,sort&compress]{natbib}
\usepackage{mathrsfs}
\usepackage{pdfcomment}
\usepackage{xcolor}
\usepackage{esint}

\hypersetup{hidelinks,
	colorlinks=true,
	allcolors=black,
	pdfstartview=Fit,
	breaklinks=true
}

\everymath{\displaystyle}

\numberwithin{equation}{section}

\newcommand{\cU}{\mathcal{U}}

\newcommand{\R}{\mathbb{R}}


\DeclareMathOperator{\essinf}{ess\,inf}

\def\Om{\Omega}

\newtheorem{lemma}{Lemma}[section]
\newtheorem{theorem}[lemma]{Theorem}
\newtheorem{remark}[lemma]{Remark}

\newtheorem{proposition}[lemma]{Proposition}

\theoremstyle{definition}
\newtheorem{definition}[lemma]{Definition}

\makeatletter

\@addtoreset{equation}{section}
\makeatother

\begin{document}
	\title{The solvability and a Stackelberg-Nash game problem for degenerate elliptic equations}
	
	
	\author{\sffamily Yuanhang Liu$^{1}$, Weijia Wu$^{1,*}$, Donghui Yang$^1$, Xu Zhang$^1$   \\
		{\sffamily\small $^1$ School of Mathematics and Statistics, Central South University, Changsha 410083, China. }\\
	}
	\footnotetext[1]{Corresponding author: weijiawu@yeah.net }
	
	\email{liuyuanhang97@163.com}
	\email{weijiawu@yeah.net}
	\email{donghyang@outlook.com}
	\email{xuzhang174@163.com}
	
	\keywords{}
	\subjclass[2020]{35J70,35J25}
	
	\maketitle
	
	\begin{abstract}
		This study aims to investigate the functional properties of weak solution spaces and their compact embedding properties in relation to the Dirichlet problem associated with a specific class of degenerate elliptic equations. To expand the scope of analysis for degenerate elliptic problems, we employ weighted Sobolev inequalities and compact embedding techniques within the framework of weighted Sobolev spaces. Additionally, we apply these findings to examine a Stackelberg-Nash game problem and obtain the existence of the  Stackelberg-Nash equilibrium.

	\end{abstract}
	
	\pagestyle{myheadings}
	\thispagestyle{plain}
	\markboth{DEGENERATE ELLIPTIC EQUATIONS AND STACKELBERG-GAME PROBLEM}{YUANHANG LIU, WEIJIA WU, DONGHUI YANG AND XU ZHANG}

	\section{Introduction}
	In a wide range of disciplines, including modern physics, mathematics, probability theory, and control theory, numerous linear and nonlinear problems can be ultimately formulated as solving various types of partial differential equations. As a result, researchers from diverse fields have been drawn to investigate the solvability of such equations, establishing it as a prominent area of research. Degenerate elliptic operators, which extend the classical elliptic operators, hold significant importance in areas like stochastic processes, quantum mechanics, and differential geometry. Consequently, the study of solvability for the Dirichlet problem associated with degenerate elliptic equations has also garnered attention, albeit with relatively limited findings thus far. In \cite{cavalheiro2002approximation}, the author explored a significant finding concerning the approximation of weak solutions for degenerate elliptic equations. Specifically, it was established that a sequence of solutions derived from non-degenerate elliptic equations can serve as a reliable approximation for the weak solution of degenerate elliptic equations. Moreover, in \cite{cavalheiro2006solvability}, the author successfully demonstrated the existence and uniqueness of entropy solutions for the Dirichlet problem associated with a specific class of degenerate elliptic equations. Importantly, these results were achieved through the application of established methodologies outlined in \cite{benilan19951}. In \cite{cavalheiro2008regularity}, the author established the existence of higher order weak derivatives of weak solutions of the Dirichlet problem for a class of degenerate elliptic equations. For other degenerate elliptic equations, see \cite{monticelli2015existence,di2020existence,farina2013monotonicity,boimatov2002estimates}.
	
	This study focuses on exploring the functional properties of weak solution spaces and their compact embedding properties for the Dirichlet problem associated with a specific class of degenerate elliptic equations. To extend the analysis of degenerate elliptic problems, we utilize weighted Sobolev inequalities and compact embedding techniques within the framework of weighted Sobolev spaces. Moreover, we provide an application of these findings by investigating a game problem. Differential games were introduced originally by Isaacs (see
	\cite{isaacs1965}). Since then,
	lots of researchers were attracted to establish and improve the related theory. Meanwhile,
	the theory was applied to a large number of fields. For a comprehensive survey on the
	differential game theory, we refer to
	\cite{bacsar1998dynamic,elliott1972existence,evans1984differential,friedman1971} and the references therein. The famous game problem, Stackelberg-Nash game problem, is an interesting question in the framework of mathematical finance. For instance, it is well
	known that the price of an European call option is governed by a backward PDE. The independent space variable must be interpreted as the stock price and the time variable is in fact the reverse of time. In this regard, it can be interesting to control the solution of the system with the composed action
	of several agents, each of them corresponding to a different range of values of the space variable. For further information on
	the modeling and control of phenomena of this kind, see for instance \cite{ross2011elementary,wilmott1995mathematics}. For more Stackelberg–Nash game problems, we
	refer the reader to the works in \cite{diaz2004approximate,guillen2013approximate}.
	
	The following sections of this manuscript are structured as follows: Section 2 presents a comprehensive overview of the key research questions and highlights the significant findings. In Section 3, we provide crucial supplementary findings that are essential to the analysis. Section 4 establishes the well-posedness of the degenerate elliptic equations. Finally, Section 5 investigates the Stackelberg-Nash game problems.

	\section{Problems formulation and main results}
	In this section, we will outline the principal issues investigated and present the pivotal
	findings of this manuscript.  Firstly, let us introduce necessary notations.
	
	Let $\alpha\in ( 0,1] $, $\Omega=(0,1)\times(0,1)$. $\omega \subset \Omega$ is a nonempty subset and $\chi_{\omega}$ is the corresponding characteristic function. We denote by $|\cdot|$ the Lebesgue measure on $\R$.
	
	The system, that we consider in this paper, is described by the following degenerate elliptic
	equation:
	\begin{equation}\label{1.1}
		\begin{cases}
			\frac{1}{2}\partial_{xx}u-x^{\alpha}\partial_{y}u=f, & (x,y)\in \Om,\\
			u(x,y)=0, & (x,y)\in \partial \Om,
		\end{cases}
	\end{equation}
	where $u$ is the state variable, $f\in L^2(\Omega;x^{-\alpha})$.
	
	Denote
	\begin{equation*}
		\begin{split}
			W^{1,1}(\Omega;x^{\alpha})
			&:=\{u\in L^{2}(\Omega): \partial_{x} u  \in L^{2}(\Omega), x^{\frac{\alpha}{2}}\partial_{y} u  \in L^{2}(\Omega) \},\\
			V(\Omega)
			&:=\{u\in W^{1,1}(\Omega;x^{\alpha}): \partial_{x}\partial_{y} u  \in L^{2}(\Omega)\},\\
			L^2(\Omega;x^{\alpha})
			&:=\{f\in L^{2}(\Omega): x^{-\alpha}f  \in L^{2}(\Omega) \},
		\end{split}
	\end{equation*}
	endowed with the norms
	\begin{equation*}
		\begin{split}
			\|u\|_{W^{1,1}(\Omega;x^{\alpha})}&=(\|u\|_{L^{2}(\Omega)}^2+\|\partial_x u\|_{L^{2}(\Omega)}^2 +\|x^{\frac{\alpha}{2}}\partial_{y} u\|_{L^{2}(\Omega)}^2  )^{\frac{1}{2}},\\
			\|u\|_{V(\Omega)}&=(\|u\|_{W^{1,1}(\Omega;x^{\alpha})}^2+\|\partial_{x}\partial_{y} u\|_{L^{2}(\Omega)}^2   )^{\frac{1}{2}},\\
			\|f\|_{L^2(\Omega;x^{-\alpha})}&=\|x^{-\alpha}f\|_{L^2(\Omega)},
		\end{split}
	\end{equation*}
	respectively.
	
	Denote by $W_{0}^{1,1}(\Omega;x^\alpha)$ and $V_{0}(\Omega)$, the closure of $C_0^\infty (\Om)$ in the spaces $W^{1,1}(\Omega;x^\alpha)$ and $V(\Omega)$, respectively. i.e.,
	\begin{equation*}
		\begin{split}
			W_{0}^{1,1}(\Omega;x^\alpha)&=\overline{C_0^\infty (\Om)}^{W^{1,1}(\Omega;x^{\alpha})},\\
			V_{0}(\Omega)&=\overline{C_0^\infty (\Om)}^{V(\Omega)}.
		\end{split}
	\end{equation*}
	
	Denote $\mathcal{D}'(\Omega)=\left(C_0^\infty(\Omega)\right)'$.

	\begin{definition}\label{3.8}
		We say that $u\in W_0^{1,1}(\Omega;x^\alpha)$ is a weak solution of \eqref{1.1}, if for any $\varphi\in C_0^\infty(\Omega)$, 
		\begin{equation}\label{3.2.9}
			\iint_{\Omega} \left[(x^\alpha\partial_yu)\varphi+\frac{1}{2}(\partial_xu)( \partial_x\varphi)\right]dxdy=\iint_\Omega f\varphi dxdy.
		\end{equation}
	\end{definition}

	The main result of this paper is the following well-posedness for the degenerate elliptic equation \eqref{1.1}:
	\begin{theorem}\label{3.12}
		Let $f\in L^2(\Omega,x^{-\alpha})$. Then the equation \eqref{1.1} has a unique weak solution $u\in W_{0}^{1,1}(\Omega;x^\alpha)$. Moreover, there is a constant $C>0$ such that the following inequality holds:
		\begin{equation}
			\label{ine44}
			\|u\|_{W^{1,1}(\Omega;x^\alpha)}\leq C\|f\|_{L^2(\Omega,x^{-\alpha})}.
		\end{equation}
		
	\end{theorem}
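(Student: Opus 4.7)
The proof has three parts: the a priori bounds on $\|u\|_{L^2}$ and $\|\partial_x u\|_{L^2}$ (which also yield uniqueness), existence via a Galerkin scheme, and the more delicate bound on $\|x^{\alpha/2}\partial_y u\|_{L^2}$.

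First I would insert $\varphi=u$ into the weak formulation \eqref{3.2.9}, which is legitimate by the density of $C_0^\infty(\Omega)$ in $W_0^{1,1}(\Omega;x^\alpha)$. Approximating $u$ by smooth functions and passing to the limit shows that the skew-symmetric term vanishes:
\[
\iint_\Omega x^\alpha(\partial_y u)\,u\,dxdy=\tfrac12\iint_\Omega x^\alpha\partial_y(u^2)\,dxdy=0,
\]
since $x^\alpha$ is independent of $y$ and $u$ has zero trace on $\{y=0\}\cup\{y=1\}$. Consequently \eqref{3.2.9} with $\varphi=u$ collapses to $\tfrac12\|\partial_x u\|_{L^2}^2=\iint_\Omega fu\,dxdy$. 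Using $x^{-\alpha}\ge 1$ on $\Omega$ gives $|\iint fu|\le\|f\|_{L^2(\Omega;x^{-\alpha})}\|u\|_{L^2}$, and the one-dimensional Poincar\'e inequality in the $x$-direction (from $u(0,y)=0$) closes the loop to yield $\|u\|_{L^2}+\|\partial_x u\|_{L^2}\le C\|f\|_{L^2(\Omega;x^{-\alpha})}$. Uniqueness is immediate by linearity.

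For existence I would run a Galerkin scheme. Pick $\{e_k\}\subset C_0^\infty(\Omega)$ whose linear span is dense in $W_0^{1,1}(\Omega;x^\alpha)$, set $V_n:=\mathrm{span}(e_1,\ldots,e_n)$, and solve the finite-dimensional system $a(u_n,e_k)=(f,e_k)$ for $k\le n$, where $a(u,\varphi):=\iint_\Omega[(x^\alpha\partial_y u)\varphi+\tfrac12(\partial_x u)(\partial_x\varphi)]\,dxdy$. The Galerkin matrix is invertible because $a(v,v)=\tfrac12\|\partial_x v\|_{L^2}^2>0$ on $V_n\setminus\{0\}$ by Poincar\'e. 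Applying the first-step argument to $u_n$ gives bounds uniform in $n$; a weakly convergent subsequence in $W_0^{1,1}(\Omega;x^\alpha)$ then yields a weak solution in the limit.

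The main obstacle is the $\|x^{\alpha/2}\partial_y u\|_{L^2}$ bound, because the natural multiplier $\partial_y u$ generically fails to vanish on $\{y=0,1\}$ and so does not lie in $W_0^{1,1}(\Omega;x^\alpha)$. At the formal level, for a smooth $v$ with $v|_{\partial\Omega}=0$ a double integration by parts produces the crucial cancellation
\[
\iint_\Omega\partial_{xx}v\,\partial_y v\,dxdy=-\tfrac12\int_0^1\bigl[(\partial_x v)^2\bigr]_{y=0}^{y=1}\,dx=0,
\]
since $\partial_y v=0$ on $\{x=0,1\}$ (killing the $x$-IBP boundary terms) and $\partial_x v=0$ on $\{y=0,1\}$ (killing the $y$-IBP boundary terms). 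Multiplying the PDE by $\partial_y v$ then gives $\|x^{\alpha/2}\partial_y v\|_{L^2}^2=-\iint_\Omega f\,\partial_y v\,dxdy\le\|f\|_{L^2(\Omega;x^{-\alpha})}\|x^{\alpha/2}\partial_y v\|_{L^2}$ after writing $f\,\partial_y v=(x^{-\alpha/2}f)(x^{\alpha/2}\partial_y v)$ and using $x^{-\alpha/2}\le x^{-\alpha}$ on $(0,1)$. To make this rigorous for the weak solution, I would apply the identity at the Galerkin level, where $u_n\in C_0^\infty(\Omega)$ is genuinely smooth so the boundary cancellations are automatic: $u_n$ satisfies $\tfrac12\partial_{xx}u_n-x^\alpha\partial_y u_n=f+r_n$ with a residual $r_n$ that is $L^2$-orthogonal to $V_n$, and the technical heart of the argument is to control the residual contribution $\iint r_n\partial_y u_n$ uniformly in $n$, most cleanly by invoking the compact-embedding/regularity machinery of Section~3 so as to place the weak limit in $V_0(\Omega)$ where the $\partial_y u$ multiplier may be used directly.
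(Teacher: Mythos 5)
Your first step (testing with $\varphi=u$ by density, killing the skew term $\iint_\Omega x^\alpha(\partial_y u)u\,dxdy$ by approximation, and concluding the bounds on $\|u\|_{L^2}$ and $\|\partial_x u\|_{L^2}$ together with uniqueness) is correct, and in fact gives a cleaner uniqueness argument than the paper's. But the proposal has a genuine gap exactly at the point you yourself flag as "the technical heart": the bound on $\|x^{\alpha/2}\partial_y u\|_{L^2}$ is never actually obtained, and without it neither the existence argument nor the estimate \eqref{ine44} closes. Your Galerkin scheme uses the form $a(v,v)=\tfrac12\|\partial_x v\|_{L^2}^2$, which is coercive only in the $\partial_x$-seminorm; hence the uniform bounds you extract control only $u_n$ and $\partial_x u_n$, and the claimed "weakly convergent subsequence in $W_0^{1,1}(\Omega;x^\alpha)$" is not available. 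The limit is then at best a distributional solution not known to lie in $W_0^{1,1}(\Omega;x^\alpha)$, so it is not a weak solution in the sense of Definition \ref{3.8}. The repair you sketch does not work as stated: the Galerkin function $u_n$ satisfies the PDE only up to a residual $r_n$ orthogonal to $V_n$, the multiplier $\partial_y u_n$ does not belong to $V_n$, no mechanism is offered to control $\iint r_n\,\partial_y u_n$ uniformly, and the appeal to "the compact-embedding/regularity machinery of Section 3" to place the solution in $V_0(\Omega)$ has no support in the paper: there is no regularity theory there, and the solution constructed is only in $W_0^{1,1}(\Omega;x^\alpha)$, with $V_0(\Omega)$ serving merely as the space of test functions.

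For comparison, the paper makes your formal $\partial_y u$-multiplier rigorous by moving the extra derivative onto the test function rather than onto the solution: it works with the asymmetric bilinear form $a(u,v)=\iint_\Omega\big((x^\alpha\partial_y u)(\partial_y v)+\tfrac12(\partial_x u)(\partial_x\partial_y v)\big)e^{-\theta y}\,dxdy$ on the pair $W_0^{1,1}(\Omega;x^\alpha)\times V_0(\Omega)$ (equivalence of this formulation with Definition \ref{3.8} is Lemmas \ref{3.9}--\ref{3.10}). Coercivity $a(v,v)\geq\delta\|v\|_{W^{1,1}}^2$ on $V_0(\Omega)$ is exactly your boundary cancellation, justified through the trace result $(\partial_x v)(\cdot,0)=0$ of Lemma \ref{3.7} and the weight $e^{-\theta y}$, which converts the $y$-integration by parts into a good $\theta$-term instead of an uncontrolled boundary term; existence in $W_0^{1,1}$ then follows from the generalized (Lions-type) Lax--Milgram Lemma \ref{3.12L}, and \eqref{ine44} from $a(u,u)\leq\|f\|_{L^2(\Omega,x^{-\alpha})}\|u\|_{W^{1,1}}$. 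Some such device (or an alternative, e.g.\ regularizing the equation by adding $\epsilon\partial_{yy}u$ and proving an $\epsilon$-uniform estimate) is needed; as written, your argument proves uniqueness and a partial a priori bound but not the theorem.
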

	
	As an important application of degenerate elliptic equation \eqref{1.1}, we consider the following degenerate problem with three controls (one leader and two followers), but very similar considerations hold for other systems (degenerated cases or non-degenerated cases) with a
	higher number of controls.
	\begin{equation}\label{1.1-1}
		\begin{cases}
			\frac{1}{2}\partial_{xx}u-x^{\alpha}\partial_{y}u=\chi_\omega g+\chi_{\omega_1} f_1+\chi_{\omega_2} f_2,, & (x,y)\in \Om,\\
			u(x,y)=0, & (x,y)\in \partial \Om,
		\end{cases}
	\end{equation}
	where $\omega,\omega_1,\omega_2$ are subsets of $\Omega$. $\omega$ is the main (leader) control domain and $\omega_1,\omega_2$ are the secondary (followers) control domains. The controls $g,f_1,f_2\in L^2(\Omega,x^{-\alpha})$, where $g$ is the main (leader) control and $f_1$ and $f_2$ are the secondary (followers) controls. The solution of system (\ref{1.1-1}) denote by $y(g,f_1,f_2)$.
	
	We define the following admissible sets of controls:
	\begin{equation*}
		\cU_1:=\big\{f_1\in L^2(\Omega;x^{-\alpha}):\|f_1\|_{L^2(\Omega;x^{-\alpha})}\leq M_1  \big\},
	\end{equation*}
	\begin{equation*}
		\cU_2:=\big\{f_2\in L^2(\Omega;x^{-\alpha}):\|f_2\|_{L^2(\Omega;x^{-\alpha})}\leq M_2  \big\}.
	\end{equation*}
	where $M_1,M_2$ are positive constants. Meanwhile, we introduce the following two functionals: For each $i=1,2$,
	the functional $J_i: (L^2(\Omega;x^{-\alpha}))^2\rightarrow [0,+\infty)$ is defined by
	\begin{equation}\label{Intro-2}
		J_i(f_1,f_2):= \|y(g,f_1,f_2)-y^i_d\|^2_{L^2(G_i)}+\|f_i\|^2_{L^2(\omega_i;x^{-\alpha})},
	\end{equation}
	where $G_i\subset\Omega\ (i=1,2)$  representing observation domains for the followers and  $y^i_d\in L^2(\Omega)$ and
	\begin{equation}\label{y1-y2}
		y^1_d\not=y^2_d.
	\end{equation}
	
	The mathematical model for the control problem is:
	
	The followers $f_1$ and $f_2$ assume that the leader $g $ has made a choice and intend to be a Nash equilibrium for the costs functionals $J_i(f_1,f_2),i=1,2$. Thus, once the leader $g$ has been fixed,
	
	{\bf(P)}\;\;Does there exist $(f_1^*,f_2^*)\in {\mathcal U}_1\times {\mathcal U}_2$ with respect to $g$ so that
	\begin{equation}\label{20180208-1}
		J_1(f_1^*,f_2^*)\leq J_1(f_1,f_2^*)\;\;\mbox{for all}\;\;f_1\in {\mathcal U}_1
	\end{equation}
	and
	\begin{equation}\label{20180208-2}
		J_2(f_1^*,f_2^*)\leq J_2(f_1^*,f_2)\;\;\mbox{for all}\;\;f_2\in {\mathcal U}_2?
	\end{equation}
	We call the problem {\bf(P)} as a {\it Stackelberg-Nash game problem}. It is a noncooperative
	game problem of the followers. If the answer to the problem {\bf(P)} is yes, we call $(f_1^*,f_2^*)$ a {\it Stackelberg-Nash equilibrium} (or an {\it optimal
		strategy pair}, or an {\it optimal control pair}) of {\bf(P)}. We can understand the problem {\bf(P)} in the following manner:
	There are two followers executing their strategies and hoping to achieve their goals $y_d^1$ and $y_d^2$, respectively.
	If the first follower chooses the strategy $f_1^*$, then the second follower can execute the strategy $f_2^*$
	so that $y(g,f_1^*,f_2^*)$ is closer to $y_d^2$; Conversely, if the second follower chooses the
	strategy $f_2^*$, then the first follower can execute the strategy $f_1^*$ so that $y(g,f_1^*,f_2^*)$ is closer
	to $y_d^1$. Roughly speaking, if one follower is deviating from $(f_1^*,f_2^*)$, then the cost functional of this
	follower would get larger; and there is no information given if both followers are deviating from
	the Nash equilibrium $(f_1^*,f_2^*)$. From the knowledge of game theory, in generally,
	Nash equilibrium is not unique.
	
	The main result of the
	{\it Stackelberg-Nash game problem}  {\bf(P)} is about the existence of a Stackelberg-Nash equilibrium.
	\begin{theorem}
		\label{Intro-3}
		Let $g\in L^2(\Omega;x^{-\alpha})$ be given. The problem {\bf(P)} with respect to the system (\ref{1.1}) admits a  Stackelberg-Nash equilibrium, i.e., there exist
		at least a pair of $(f_1^*,f_2^*)\in {\mathcal{U}}_1\times {\mathcal{U}_2}$
		with respect to $g$ so that (\ref{20180208-1}) and (\ref{20180208-2}) hold.
	\end{theorem}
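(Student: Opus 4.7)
The plan is to realize a Nash equilibrium as a fixed point of the joint best-response map. By linearity of \eqref{1.1-1} and the well-posedness in Theorem~\ref{3.12}, I would first write
\begin{equation*}
y(g,f_1,f_2)=y_0(g)+T_1f_1+T_2f_2,
\end{equation*}
where $y_0(g)$ solves \eqref{1.1-1} with $(f_1,f_2)=(0,0)$ and $T_i:L^2(\Omega;x^{-\alpha})\to W_0^{1,1}(\Omega;x^\alpha)$ is bounded linear. Composing with the compact embedding $W_0^{1,1}(\Omega;x^\alpha)\hookrightarrow L^2(\Omega)$ anticipated from Section~3, each $T_i$ is actually \emph{compact} as a map into $L^2(G_i)$, which is precisely what is needed in order to pass the fidelity term $\|\cdot-y_d^i\|_{L^2(G_i)}^2$ through weak limits.

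Next, for every fixed $f_2\in\cU_2$, the functional $f_1\mapsto J_1(f_1,f_2)$ is continuous, coercive, and strictly convex, the strict convexity coming from the penalty $\|f_1\|_{L^2(\omega_1;x^{-\alpha})}^2$. Since $\cU_1$ is a weakly compact convex subset of a separable Hilbert space, a unique minimizer $R_1(f_2)\in\cU_1$ exists, and symmetrically so does $R_2(f_1)\in\cU_2$. The decisive point is the weak-to-weak continuity of $R_i$: if $f_2^n\rightharpoonup f_2$, the bounded sequence $R_1(f_2^n)$ admits a weak subsequential limit $\tilde f_1$; compactness of $T_1,T_2$ yields strong $L^2(G_1)$-convergence of the corresponding states, and combining weak lower semicontinuity of the penalty with the optimality inequality against an arbitrary competitor $\varphi\in\cU_1$ gives
\begin{equation*}
J_1(\tilde f_1,f_2)\leq\liminf_{k\to\infty}J_1(R_1(f_2^{n_k}),f_2^{n_k})\leq\lim_{k\to\infty}J_1(\varphi,f_2^{n_k})=J_1(\varphi,f_2),
\end{equation*}
which forces $\tilde f_1=R_1(f_2)$ by uniqueness; the full sequence then converges by a standard subsubsequence argument.

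With this in hand, $\Lambda(f_1,f_2):=(R_1(f_2),R_2(f_1))$ is a weakly continuous self-map of the convex weakly compact, and by separability weakly metrizable, set $\cU_1\times\cU_2$, so the Schauder--Tychonoff fixed point theorem produces $(f_1^*,f_2^*)$ with $\Lambda(f_1^*,f_2^*)=(f_1^*,f_2^*)$, which by construction satisfies \eqref{20180208-1}--\eqref{20180208-2}. The main obstacle is the weak-to-strong transition in the fidelity term of $J_i$: without compactness of $W_0^{1,1}(\Omega;x^\alpha)$ into $L^2(\Omega)$, only weak state convergence is available and the liminf comparison above collapses. The whole scheme therefore hinges on the compact embedding of the weighted solution space, precisely the place where the degeneracy $x^\alpha$ enters the analysis and where the main technical work of the preceding sections is invested.
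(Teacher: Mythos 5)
Your overall architecture (best responses plus a fixed point in the weak topology, with the compact embedding $W_0^{1,1}(\Omega;x^\alpha)\hookrightarrow L^2(\Omega)$ doing the work of turning weak control convergence into strong state convergence) matches the analytic core of the paper's argument. But there is a genuine gap at the step on which your whole single-valued scheme rests: the claim that $f_1\mapsto J_1(f_1,f_2)$ is strictly convex on $\mathcal{U}_1$ and hence has a \emph{unique} minimizer $R_1(f_2)$. By \eqref{Intro-2} the penalty is $\|f_1\|^2_{L^2(\omega_1;x^{-\alpha})}$ and the control enters the state in \eqref{1.1-1} only through $\chi_{\omega_1}f_1$, while $\mathcal{U}_1$ consists of functions on all of $\Omega$ with $\|f_1\|_{L^2(\Omega;x^{-\alpha})}\leq M_1$. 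Thus $J_1$ is completely blind to $f_1$ on $\Omega\setminus\omega_1$: if $f_1$ and $f_1'$ agree on $\omega_1$ but differ outside, they give the same cost, strict convexity fails along such directions, and the minimizer is never unique when $\omega_1\subsetneq\Omega$. Consequently $R_1$ (and $R_2$, and hence $\Lambda$) is not a well-defined single-valued map, and the key identification ``$\tilde f_1=R_1(f_2)$ by uniqueness'' in your weak-to-weak continuity argument collapses; what your liminf chain actually proves is only that $\tilde f_1$ is \emph{some} best response.

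There are two natural repairs. One is to stay single-valued by making a canonical selection: only the restriction to $\omega_1$ of a minimizer is unique (the penalty is strictly convex in $f_1|_{\omega_1}$ and the fidelity term is convex), so define $R_1(f_2)$ as the minimizer supported in $\omega_1$, note that the subspace of functions vanishing off $\omega_1$ is weakly closed so your limit argument identifies the limit with this selection, and check that a fixed point of the selected map is still an equilibrium for the original $\mathcal{U}_1\times\mathcal{U}_2$ (it is, since truncating by $\chi_{\omega_i}$ does not increase either the norm constraint or the cost). The other repair is the paper's route: accept that the best responses are set-valued correspondences $\Phi_1,\Phi_2$ and apply the Kakutani fixed point theorem, verifying nonemptiness (direct method, using the same compact embedding to pass to strong state convergence), convexity of the value sets (convexity of the fidelity term, no uniqueness needed), and closedness of the graph (again via the compact embedding). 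The paper's approach trades your uniqueness/continuity analysis for these three weaker properties, which is exactly what makes the partial-observation, partially penalized structure of $J_i$ harmless; your approach, once corrected by an explicit selection, is somewhat more economical but needs the extra bookkeeping about supports that you currently omit.
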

	
	Several remarks are given in order as follows.
	\begin{remark}
		Since $C_0^\infty(\Omega)$ is dense in  $ W_0^{1,1}(\Omega;x^\alpha)$, the test function $\varphi$ in the integral equation \eqref{3.2.9} can be taken as an arbitrary function in $W_0^{1,1}(\Omega;x^\alpha)$ .
	\end{remark}
	
	\begin{remark}
		The assumption of $\alpha\in (0,1]$ is crucial in this context. This choice allows us to establish the compactness property $W_{0}^{1,1}(\Omega;x^\alpha) \hookrightarrow L^2(\Omega)$, as demonstrated in Proposition \ref{compactembed} (to be presented later). However, if $\alpha\notin (0,1]$, we have not yet obtained a similar result. It is noteworthy that in a particular case when $\alpha=1$, equation \eqref{1.1} corresponds to a Kolmogorov-type equation. Our findings illustrate the extension of our results to classical Kolmogorov-type equation, thereby showcasing the broader applicability and significance of our research. 
		For more Kolmogorov-type equation, such as parabolic case, we
		refer the reader to the work in \cite{beauchard2009some,le2016null,huang2019lp}.
	\end{remark}
	\begin{remark}
		We focus our attention on the function spaces $V(\Omega)$ and $L^2(\Omega,x^{-\alpha})$ for some reasons. On one hand, the existence of the solution cannot be directly established using the classical Lax-Milgram theorem. Instead, we need to demonstrate the validity of a generalized Lax-Milgram theorem (refer to Lemma \ref{3.12L}). On the other hand, the choice of $L^2(\Omega,x^{-\alpha})$ in this context may initially appear unconventional. However, it is motivated by technical constraints outlined in Theorem \ref{3.12}. This observation highlights the inherent distinctions between degenerate elliptic equations and their non-degenerate counterparts.
	\end{remark}
	\begin{remark}
		The presence of certain captivating inquiries is evident in the following: What nature of problem emerges when the classical Kolmogorov operator lacks symmetry and eigenvalues? Does the degenerate elliptic operator associated with the equation under investigation exhibit eigenvalues?
	\end{remark}
	\begin{remark}
		In prior research on control problems for degenerate elliptic equations, numerous scholars have employed approximation methodologies to examine this matter, see \cite{barles1995dirichlet,lou2003maximum,barles1990comparison}. Nevertheless, in the context of the control problem explored in this manuscript, the existence of Stackelberg-Nash equilibrium is deduced through the utilization of pertinent solution space properties. This approach introduces a fresh perspective for investigating control problems associated with degenerate elliptic equations.
	\end{remark}
	
	\section{Auxiliary conclusions}
	In this section, we present several supplementary findings that will be utilized in subsequent analyses. In what following, we denote the norms $\|\cdot\|_{W^{1,1}(\Omega;x^{\alpha})}$ as $\|\cdot\|_{W^{1,1}}$ and $\|\cdot\|_{L^2(\Omega;x^{\alpha})}$ as $\|\cdot\|_{L^2}$ for simplifying the notations.
	\begin{lemma}\label{3.1}
		Let $u\in \mathcal{D}'(\Omega)$, then we have
		\begin{equation*}
			x^\frac{\alpha}{2}\partial_yu\in L^2(\Omega)\Leftrightarrow\partial_y(x^\frac{\alpha}{2}u)\in L^2(\Omega).
		\end{equation*}
	\end{lemma}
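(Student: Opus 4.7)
The plan is to reduce the equivalence to a single distributional identity, namely
\[
\partial_y\bigl(x^{\alpha/2} u\bigr) = x^{\alpha/2}\,\partial_y u \qquad \text{in } \mathcal{D}'(\Omega).
\]
Once this identity is established, the two sides are exactly the same distribution on $\Omega$, so one lies in $L^2(\Omega)$ if and only if the other does, which is precisely the claim.

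To justify the identity, I would first note that since $\Omega = (0,1)\times(0,1)$ is the open unit square and $\alpha \in (0,1]$, the function $x\mapsto x^{\alpha/2}$ is $C^\infty$ on $\Omega$ (there is no issue at $x=0$, which is not in $\Omega$). Hence multiplication by $x^{\alpha/2}$ is a well-defined continuous operation on $\mathcal{D}'(\Omega)$, and for every test function $\varphi\in C_0^\infty(\Omega)$ the product $x^{\alpha/2}\varphi$ again belongs to $C_0^\infty(\Omega)$. Moreover, since $x^{\alpha/2}$ is independent of $y$, the Leibniz rule gives $\partial_y(x^{\alpha/2}\varphi) = x^{\alpha/2}\partial_y\varphi$ pointwise.

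The verification is then a direct test-function computation: for $\varphi\in C_0^\infty(\Omega)$,
\[
\bigl\langle \partial_y(x^{\alpha/2} u),\varphi\bigr\rangle
= -\bigl\langle x^{\alpha/2} u,\partial_y\varphi\bigr\rangle
= -\bigl\langle u,\,x^{\alpha/2}\partial_y\varphi\bigr\rangle
= -\bigl\langle u,\,\partial_y(x^{\alpha/2}\varphi)\bigr\rangle
= \bigl\langle \partial_y u,\,x^{\alpha/2}\varphi\bigr\rangle
= \bigl\langle x^{\alpha/2}\partial_y u,\varphi\bigr\rangle,
\]
proving the identity.

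There is really no hard obstacle here; the only subtlety worth flagging is the verification that multiplication by $x^{\alpha/2}$ makes sense on $\mathcal{D}'(\Omega)$ (which is immediate because $x^{\alpha/2}\in C^\infty(\Omega)$), and that the differentiation in $y$ commutes with this multiplication (which follows from $y$-independence of $x^{\alpha/2}$ via Leibniz). With the distributional equality in hand, the equivalence of the two $L^2$ memberships is automatic.
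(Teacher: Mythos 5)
Your proof is correct and follows essentially the same route as the paper: both establish the distributional identity $\partial_y(x^{\alpha/2}u)=x^{\alpha/2}\partial_y u$ via the test-function computation using $x^{\alpha/2}\varphi\in C_0^\infty(\Omega)$ and the $y$-independence of $x^{\alpha/2}$, and then read off the equivalence. Your explicit remarks that $x^{\alpha/2}$ is smooth on the open square and that multiplication by it preserves $C_0^\infty(\Omega)$ are exactly the (implicit) justifications behind the paper's chain of equalities.
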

	\begin{proof}
		For any $\varphi\in C_0^\infty(\Omega)$, we have $x^\frac{\alpha}{2}\varphi\in C_0^\infty(\Omega)$, Thus, from the definition of weak derivative we have
		\begin{equation*}
			\begin{split}
				\iint_\Omega (x^\frac{\alpha}{2}\partial_yu)\varphi dxdy
				&=\iint_\Omega(\partial_yu)(x^\frac{\alpha}{2}\varphi)dxdy=-\iint_\Omega u(\partial_y(x^\frac{\alpha}{2}\varphi))dxdy\\
				&=-\iint_\Omega(x^\frac{\alpha}{2}u)\partial_y\varphi dxdy=\iint_\Omega \varphi (\partial_y(x^\frac{\alpha}{2}u))dxdy.
			\end{split}
		\end{equation*}
		Hence $\forall u \in \mathcal{D}'(\Omega)$,   $x^\frac{\alpha}{2}\partial_yu=\partial_y(x^\frac{\alpha}{2}u)$.  i.e. $x^\frac{\alpha}{2}\partial_yu\in L^2(\Omega)\Leftrightarrow\partial_y(x^\frac{\alpha}{2}u)\in L^2(\Omega)$.
	\end{proof}
	
	\begin{proposition}
		\label{3.2}
		$\left(W^{1,1}(\Omega;x^\alpha), (\cdot,\cdot)_{W^{1,1}}\right)$ is a  Hilbert space.
	\end{proposition}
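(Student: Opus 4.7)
The plan is to check that the bilinear form
\[
(u,v)_{W^{1,1}} := (u,v)_{L^2(\Omega)} + (\partial_x u,\partial_x v)_{L^2(\Omega)} + (x^{\alpha/2}\partial_y u,\, x^{\alpha/2}\partial_y v)_{L^2(\Omega)}
\]
defines an inner product whose induced norm coincides with $\|\cdot\|_{W^{1,1}}$ and then to verify completeness. Symmetry, bilinearity and positive definiteness are immediate from the corresponding properties of the $L^2(\Omega)$ inner product (positive definiteness: if $\|u\|_{W^{1,1}}=0$ then already $\|u\|_{L^2(\Omega)}=0$, so $u=0$). Thus only completeness requires real work.

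Let $\{u_n\}\subset W^{1,1}(\Omega;x^\alpha)$ be Cauchy. By definition of the norm the three sequences $\{u_n\}$, $\{\partial_x u_n\}$ and $\{x^{\alpha/2}\partial_y u_n\}$ are Cauchy in $L^2(\Omega)$, so by completeness of $L^2(\Omega)$ there exist $u,v,w\in L^2(\Omega)$ with
\[
u_n\to u,\qquad \partial_x u_n\to v,\qquad x^{\alpha/2}\partial_y u_n\to w\quad\text{in } L^2(\Omega).
\]
The goal is to identify $v=\partial_x u$ and $w=x^{\alpha/2}\partial_y u$ in $\mathcal{D}'(\Omega)$.

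For the first identity, test against any $\varphi\in C_0^\infty(\Omega)$: $L^2$-convergence of $u_n$ to $u$ and of $\partial_x u_n$ to $v$ allow passage to the limit in
\[
\iint_\Omega (\partial_x u_n)\varphi\,dxdy = -\iint_\Omega u_n (\partial_x\varphi)\,dxdy,
\]
giving $v=\partial_x u$ distributionally, hence in $L^2(\Omega)$. For the second identity the natural approach is to exploit Lemma \ref{3.1}: it gives $x^{\alpha/2}\partial_y u_n = \partial_y(x^{\alpha/2}u_n)$ in $\mathcal{D}'(\Omega)$. Since the multiplier $x^{\alpha/2}$ is bounded on $\Omega=(0,1)\times(0,1)$, the $L^2$-convergence $u_n\to u$ yields $x^{\alpha/2}u_n\to x^{\alpha/2}u$ in $L^2(\Omega)$, and distributional differentiation is continuous under $L^2$-convergence, so $\partial_y(x^{\alpha/2}u_n)\to \partial_y(x^{\alpha/2}u)$ in $\mathcal{D}'(\Omega)$. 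Combining this with $x^{\alpha/2}\partial_y u_n\to w$ in $L^2(\Omega)$ forces $w=\partial_y(x^{\alpha/2}u)$, and applying Lemma \ref{3.1} once more gives $w=x^{\alpha/2}\partial_y u\in L^2(\Omega)$. Therefore $u\in W^{1,1}(\Omega;x^\alpha)$ and $u_n\to u$ in the $W^{1,1}$ norm, completing the proof.

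The only slightly subtle step is the identification of $w$: one cannot directly test $x^{\alpha/2}\partial_y u_n$ against $\varphi\in C_0^\infty(\Omega)$ and integrate by parts without worrying about the derivative of $x^{\alpha/2}$ blowing up near $x=0$. Lemma \ref{3.1} is exactly what sidesteps this difficulty by letting the weight $x^{\alpha/2}$ sit outside the distributional $\partial_y$, after which the argument reduces to standard closedness of $\partial_y$ on $L^2(\Omega)$.
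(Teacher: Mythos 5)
Your proposal is correct and follows essentially the same route as the paper: reduce to completeness of $L^2(\Omega)$, identify $\partial_x u$ by the standard distributional limit, and handle the weighted $y$-derivative via Lemma \ref{3.1} together with the $L^2$-convergence of $x^{\alpha/2}u_n$ to $x^{\alpha/2}u$ (the paper writes this out as an explicit integration by parts against $\varphi\in C_0^\infty(\Omega)$, which is the same closedness-of-$\partial_y$ argument you describe). Your closing remark about why Lemma \ref{3.1} is needed — to avoid differentiating the weight near $x=0$ — is exactly the point of that lemma in the paper's proof as well.
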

	\begin{proof}
		On one hand, it is easy to verify that $(W^{1,1}(\Omega;x^{\alpha}),(\cdot,\cdot)_{W^{1,1}})$ is an inner product space.
		On the other hand, if $\{u_n\}_{n\in\mathbb{N}}\subset W^{1,1}(\Omega;x^\alpha)$ is a  Cauchy sequence, then  $\{u_n\}_{n\in\mathbb{N}},  \{\partial_xu_n\}_{n\in\mathbb{N}}, \{x^\frac{\alpha}{2}\partial_yu_n\}_{n\in\mathbb{N}}$ is a Cauchy sequence of $L^2(\Omega)$. Therefore, there exist  $u_0, v_0, w_0\in L^2(\Omega)$ such that when  $n\rightarrow\infty$ , we have 
		\begin{equation}\label{proof3.1.1}
			u_n\stackrel{\mathrm s}{\to} u_0,\ \partial_xu_n\stackrel{\mathrm s}{\to} v_0,\ \partial_y(x^\frac{\alpha}{2}u_n)=x^\frac{\alpha}{2}\partial_yu_n\stackrel{\mathrm s}{\to} w_0
		\end{equation}
		in the space $ L^2(\Omega)$.
		For any  $\varphi\in C_0^\infty(\Omega)$,
		\begin{equation*}
			\begin{split}
				\iint_\Omega (\partial_xu_n)\varphi dxdy
				&=-\iint_\Omega u_n\partial_x\varphi dxdy\rightarrow -\iint_\Omega u_0\partial_x\varphi dxdy,\\
				\iint_\Omega (x^\frac{\alpha}{2}\partial_yu_n)\varphi dxdy
				&=\iint_\Omega (\partial_y(x^\frac{\alpha}{2}u_n))\varphi dxdy\\
				&=-\iint_\Omega x^\frac{\alpha}{2}u_n \partial_y\varphi dxdy\rightarrow -\iint_\Omega x^\frac{\alpha}{2}u_0\partial_y\varphi dxdy.
			\end{split}
		\end{equation*}
		By Lemma \ref{3.1} and \eqref{proof3.1.1}, it is easy to see that in $\mathcal{D}'(\Omega)$ it holds that $v_0=\partial_xu_0, w_0=\partial_y(x^\frac{\alpha}{2}u_0)=x^\frac{1} {2}\partial_yu_0$. Thus  we have $v_0=\partial_xu_0, w_0=\partial_y(x^\frac{\alpha}{2}u_0)=x^\frac{\alpha}{2}\partial_yu_0$ in $L^2(\Omega)$, so $u_0\in W^{1,1}(\Omega;x^{\alpha})$. Combining with \eqref{proof3.1.1} we have 
		\begin{equation*}	
			u_n\stackrel {\mathrm{s}}{\to}u_0.
		\end{equation*}	
		in $W^{1,1}(\Omega;x^\alpha)$. The proof is completed.
	\end{proof}

	\begin{proposition}\label{3.3}
		The following statements are true:
		\begin{itemize}
			\item [(1)] $H^1(\Omega)\subsetneq W^{1,1}(\Omega;x^\alpha)$;
			\item [(2)] $H_0^1(\Omega)\subset W_0^{1,1}(\Omega;x^\alpha)$.
		\end{itemize} 
	\end{proposition}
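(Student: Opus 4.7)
The inclusion part of (1) and the whole of (2) rest on the same elementary observation: since $\Omega=(0,1)^2$, one has $x^{\alpha/2}\leq 1$ on $\Omega$, so $\|u\|_{W^{1,1}(\Omega;x^\alpha)}\leq \|u\|_{H^1(\Omega)}$ for every $u\in H^1(\Omega)$. This directly gives $H^1(\Omega)\subset W^{1,1}(\Omega;x^\alpha)$. For (2), if $u\in H_0^1(\Omega)$, pick $\phi_n\in C_0^\infty(\Omega)$ with $\phi_n\to u$ in $H^1(\Omega)$; the same bound yields $\phi_n\to u$ in $W^{1,1}(\Omega;x^\alpha)$, so $u\in\overline{C_0^\infty(\Omega)}^{W^{1,1}}=W_0^{1,1}(\Omega;x^\alpha)$.

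The substantive part is the strict inclusion in (1), for which I would exhibit an explicit witness. Since $\alpha>0$, the interval $(1,1+\alpha)$ is nonempty; pick any $\gamma$ in it and set $u(x,y):=e^{-y/x^\gamma}$. Being bounded, $u\in L^2(\Omega)$, and on $\Omega$ the classical (hence weak) derivatives are
\[
\partial_y u = -x^{-\gamma} e^{-y/x^\gamma}, \qquad \partial_x u = \gamma\, y\, x^{-\gamma-1} e^{-y/x^\gamma}.
\]
The elementary bounds $\int_0^1 e^{-2y/x^\gamma}\,dy\lesssim x^\gamma$ and $\int_0^1 y^2 e^{-2y/x^\gamma}\,dy\lesssim x^{3\gamma}$ (obtained via the substitution $s=2y/x^\gamma$) reduce the membership tests to one-dimensional integrals. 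One gets $\iint_\Omega x^\alpha(\partial_y u)^2\,dxdy\lesssim\int_0^1 x^{\alpha-\gamma}\,dx<\infty$ because $\gamma<1+\alpha$, and $\iint_\Omega(\partial_x u)^2\,dxdy\lesssim\int_0^1 x^{\gamma-2}\,dx<\infty$ because $\gamma>1$. Hence $u\in W^{1,1}(\Omega;x^\alpha)$. On the other hand $\iint_\Omega(\partial_y u)^2\,dxdy\gtrsim\int_0^1 x^{-\gamma}\,dx=+\infty$ because $\gamma>1$, so $u\notin H^1(\Omega)$, establishing the strictness.

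The hard part is precisely this construction of a witness. The weight $x^{\alpha/2}$ only moderates singularities localized near $\{x=0\}$, so any tensor product $u(x,y)=g(x)h(y)$ is doomed: $\partial_y u\notin L^2$ would force either $g\notin L^2$ (so $u\notin L^2$) or $h'\notin L^2$, and in the latter case $x^{\alpha/2}\partial_y u$ also fails to be in $L^2$ since the weight acts only in $x$. One therefore needs a genuinely non-separable coupling of the variables; the exponential profile $e^{-y/x^\gamma}$ achieves exactly this, concentrating the $y$-singularity of $\partial_y u$ at $\{x=0\}$ in an $x$-dependent way the weight can absorb whenever $1<\gamma<1+\alpha$.
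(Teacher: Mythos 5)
Your proof is correct. The inclusion in (1) and all of (2) are handled exactly as in the paper: the pointwise bound $x^{\alpha/2}\le 1$ on $\Omega$ gives $\|u\|_{W^{1,1}(\Omega;x^\alpha)}\le C\|u\|_{H^1(\Omega)}$, and (2) follows by pushing a $C_0^\infty(\Omega)$ approximating sequence for $u\in H^1_0(\Omega)$ through this bound. The genuine difference is the witness for strictness. The paper takes $u(x,y)=(x^2+y)^{1/4}$ and carries out the verification for the single exponent $\alpha=\tfrac12$, via explicit iterated integrals around the corner singularity at $(0,0)$; you take $u(x,y)=e^{-y/x^\gamma}$ with $1<\gamma<1+\alpha$, whose singular behaviour is spread along the edge $\{x=0\}$ and tuned by the free parameter $\gamma$. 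Your choice has two advantages: the membership and non-membership checks reduce cleanly to one-dimensional power integrals (finiteness of $\int_0^1 x^{\alpha-\gamma}\,dx$ and $\int_0^1 x^{\gamma-2}\,dx$, divergence of $\int_0^1 x^{-\gamma}\,dx$), and the construction works uniformly for every $\alpha\in(0,1]$, whereas the paper's computation as written settles only $\alpha=\tfrac12$ (it does extend, but this is not carried out there). Your closing observation that no separable $u=g(x)h(y)$ can distinguish the two spaces is also correct and explains well why a coupled profile is needed. One small point to make explicit in a final write-up: since your $u$ is smooth on the open square, its distributional derivatives coincide with the classical ones, which is what justifies passing from the pointwise formulas for $\partial_x u$ and $\partial_y u$ to the $L^2$-based membership statements.
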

	\begin{proof}
		We divide the proof into two steps.
		
		{\it Step 1. Show the conclusion (1).} 
		
		Let $u\in H^1(\Omega)$, then $u, \partial_x u, \partial_y u\in L^2(\Omega)$, and
		\begin{equation}\label{proof3.1.2}	
			\begin{split}
				\|u\|_{W^{1,1}(\Omega;x^{\alpha})}
				&=\left(\|u\|_{L^2(\Omega)}^2
				+\|\partial_xu\|_{L^2(\Omega)}^2+\|x^{\frac{\alpha}{2}}\partial_yu\|_{L^2(\Omega)}^2\right)^{\frac{1}{2}}\\
				&\leq C\left(\|u\|_{L^2(\Omega)}^2
				+\|\partial_xu\|_{L^2(\Omega)}^2+\|\partial_yu\|_{L^2(\Omega)}^2\right)^{\frac{1}{2}}\\
				&=C\|u\|_{H^1(\Omega)}.
			\end{split}
		\end{equation}
		Hence, $u\in W^{1,1}(\Omega;x^{\alpha})$.
		
		Next let $u(x,y)=\left(x^2+y\right)^\frac{1}{4}, (x,y)\in \Omega$, $\alpha=\frac{1}{2}$, then we have
		\begin{equation*}
			\begin{split}
				\partial_xu=\frac{x}{2}\left(x^2+y\right)^{-\frac{3}{4}},\quad \partial_yu=\frac{1}{4}\left(x^2+y\right)^{-\frac{3}{4}},
			\end{split}
		\end{equation*}
		and
		\begin{equation*}
			\begin{split}
				\|u\|_{L^2}^2
				&=\iint_\Omega (x^2+y)^\frac{1}{2} dxdy\leq \sqrt{2},\\
				\|\partial_xu\|_{L^2}^2
				&=\frac{1}{4}\iint_\Omega \frac{x^2}{\left(x^2+y\right)^\frac{3}{2}} dxdy\leq \frac{1}{4}\iint_\Omega \frac{1}{\sqrt{x^2+y}}dxdy\\
				&=\frac{1}{4}\int_0^1\int_{x^2}^{x^2+1}\frac{1}{\sqrt{z}} dzdx
				=\frac{1}{4}\int_0^1 2\sqrt{z}\Big|_{x^2}^{x^2+1} dx\leq \sqrt{2},\\
				\|x^\frac{\alpha}{2}\partial_yu\|_{L^2}^2
				&=\frac{1}{16}\iint_\Omega \frac{x^\frac{1}{2}}{\left(x^2+y\right)^\frac{3}{2}}dxdy\leq\frac{1}{16}\iint_\Omega \frac{(x^2+y)^\frac{1}{4}}{\left(x^2+y\right)^\frac{3}{2}} dxdy\\
				&=\frac{1}{16}\iint_\Omega \frac{1}{\left(x^2+y\right)^\frac{5}{4}} dxdy=\frac{1}{16}\int_0^1 dx\int_{x^2}^{x^2+1}\frac{1}{z^\frac{5}{4}}dz\\
				&=-\frac{1}{4}\int_0^1z^{-\frac{1}{4}}\Big|_{x^2}^{x^2+1} dx<\infty.
			\end{split}
		\end{equation*}
		Thus we have $u\in W^{1,1}(\Omega;x^\alpha)$, but
		\begin{equation*}
			\begin{split}
				\|\partial_yu\|_{L^2}^2
				&=\frac{1}{16}\iint_\Omega \frac{1}{\left(x^2+y\right)^\frac{3}{2}} dxdy
				=\frac{1}{16}\int_0^1 dx\int_0^1\frac{1}{\left(x^2+y\right)^\frac{3}{2}} d(x^2+y)\\
				&=\frac{1}{16}\int_0^1 dx\int_{x^2}^{x^2+1}\frac{1}{z^\frac{3}{2}} dz=-\frac{1}{8}\int_0^1z^{-\frac{1}{2}}\Big|_{x^2}^{x^2+1} dx\\
				&=-\frac{1}{8}\int_0^1\left(\frac{1}{\sqrt{x^2+1}}-\frac{1}{x}\right)dx=+\infty.
			\end{split}
		\end{equation*}
		Therefore, $u\notin H^1(\Omega)$.
		
		{\it Step 2. Show the conclusion (2).} 
		
		For any $u\in H_0^1(\Omega)$, there exists a sequence  $\{u_n\}_{n\in\mathbb{N}}\subset C_0^\infty(\Omega)$ such that
		\begin{equation*}
			\|u_n-u\|_{H^1}\rightarrow  0, n\rightarrow \infty.
		\end{equation*}
		By \eqref{proof3.1.2} we can deduce that
		\begin{equation*}
			\|u_n-u\|_{W^{1,1}}\rightarrow 0, n\rightarrow \infty.
		\end{equation*}
		Thus $u\in W_0^{1,1}(\Omega;x^\alpha)$. The proof is completed.
		
	\end{proof}
	
	\begin{lemma}\label{3.4}
		Let $u\in W^{1,1}(\Omega;x^\alpha)$, then for any $\emptyset \neq V\subset \subset \Omega$, there exists a sequence $\{u_n\}_{n\in\mathbb{N}}\subset C^\infty(\overline{V}), u\in W^{1,1}(V;x^\alpha)$, such that
		\begin{equation*}
			u_n\rightarrow u ~\text{strongly in}~W^{1,1}(V;x^\alpha),
		\end{equation*}
		where $V\subset\subset \Omega$,  $\overline{V}\subset U$ is a compact set, and
		\begin{equation*}
			W^{1,1}(V;x^\alpha)=\left\{u\in V\mid u\in W^{1,1}(\Omega;x^\alpha)\right\}.
		\end{equation*}		
	\end{lemma}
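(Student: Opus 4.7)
\medskip

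\noindent\textbf{Proof plan for Lemma \ref{3.4}.} The plan is to reduce the weighted approximation to the classical (non-weighted) one by exploiting the compact containment $V\subset\subset\Omega=(0,1)\times(0,1)$: since $\overline V$ is a compact subset of $\Omega$, its projection onto the $x$-axis stays away from $0$, so there exists $\delta>0$ with $x\geq \delta$ on $\overline V$. Consequently, on $V$ the weight $x^{\alpha}$ satisfies $\delta^{\alpha}\leq x^{\alpha}\leq 1$, so $x^{\alpha/2}\partial_y u\in L^2(V)$ is equivalent to $\partial_y u\in L^2(V)$, and the norm $\|\cdot\|_{W^{1,1}(V;x^{\alpha})}$ is equivalent to the standard $H^1(V)$ norm.

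First I would fix an intermediate open set $V'$ with $V\subset\subset V'\subset\subset\Omega$ and set
\begin{equation*}
\delta_0=\min\bigl\{\mathrm{dist}(\overline V,\partial V'),\ \mathrm{dist}(\overline{V'},\partial\Omega)\bigr\}>0,
\end{equation*}
noting by the same argument that $x\geq\delta>0$ on $\overline{V'}$ for some $\delta>0$. By the norm equivalence on $V'$, we have $u\in H^1(V')$. Next I would pick a standard mollifier $\rho\in C_0^{\infty}(B_1(0))$ with $\rho\geq 0$ and $\iint\rho=1$, and set $\rho_{\varepsilon}(x,y)=\varepsilon^{-2}\rho(x/\varepsilon,y/\varepsilon)$. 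For $0<\varepsilon<\delta_0$, the convolution $u_{\varepsilon}:=\rho_{\varepsilon}*u$ is well-defined and lies in $C^{\infty}$ on an open neighborhood of $\overline V$, so $u_{\varepsilon}\in C^{\infty}(\overline V)$. Taking $\varepsilon_n=1/n$ for $n$ large enough and $u_n:=u_{\varepsilon_n}$ produces the candidate sequence.

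For the convergence I would invoke the classical facts on mollification in $H^1(V')$: $u_n\to u$ in $L^2(V)$, and by the commutation $\partial_x(\rho_{\varepsilon}*u)=\rho_{\varepsilon}*\partial_x u$ and $\partial_y(\rho_{\varepsilon}*u)=\rho_{\varepsilon}*\partial_y u$ (valid on $V$ since derivatives can be taken on $V'$), also $\partial_x u_n\to\partial_x u$ and $\partial_y u_n\to\partial_y u$ in $L^2(V)$. Because $0\leq x^{\alpha/2}\leq 1$ on $\Omega$, the last convergence yields
\begin{equation*}
\|x^{\alpha/2}\partial_y u_n-x^{\alpha/2}\partial_y u\|_{L^2(V)}\leq \|\partial_y u_n-\partial_y u\|_{L^2(V)}\to 0.
\end{equation*}
Combining the three convergences gives $u_n\to u$ in $W^{1,1}(V;x^{\alpha})$.

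The argument is essentially a routine mollification; there is no serious obstacle. The only point requiring care is that the weight $x^{\alpha/2}$ is non-constant, so a priori one cannot directly transplant $H^1$-convergence into weighted convergence. The resolution is exactly the observation above: on the compactly contained set $V$ the weight is pinched between two positive constants, so the weighted and unweighted norms are comparable, which is the key structural reason the lemma is local rather than global.
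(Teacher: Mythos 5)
Your proposal is correct and follows essentially the same route as the paper: both exploit that on a compactly contained set the weight $x^{\alpha}$ is pinched between positive constants, so $u\in H^{1}$ locally and classical local approximation (which the paper cites as a theorem and you carry out explicitly via mollification) gives $H^{1}(V)$-convergence, which dominates the weighted norm since $x^{\alpha/2}\leq 1$. No gaps; the explicit mollifier argument is just an unpacking of the classical local approximation theorem the paper invokes.
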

	
	\begin{proof}
		Since $V\subset \subset \Omega$, there exists $\epsilon>0$ such that
		\begin{equation*}
			V\subset [\epsilon, 1-\epsilon]\times [\epsilon, 1-\epsilon]=	U.
		\end{equation*}	
		If $u\in W^{1,1}(\Omega;x^\alpha)$, then
		\begin{equation*}
			\begin{split}
				\|u\|_{H^1(U)}^2
				&=\|u\|_{L^2}^2+\|\partial_xu\|_{L^2}^2+\|\partial_yu\|_{L^2}^2\\
				&\leq \|u\|_{L^2}^2+\|\partial_xu\|_{L^2}^2+\frac{1}{\epsilon^\frac{\alpha}{2}}\|x^\frac{\alpha}{2}\partial_yu\|_{L^2}^2<+\infty.
			\end{split}
		\end{equation*}
		Thus $u\in H^1([\epsilon, 1-\epsilon]\times [\epsilon, 1-\epsilon])$. By the classical Sobolev space local approximation theorem, there exists a sequence  $\{u_n\}_{n\in\mathbb{N}}\subset C^\infty(\overline{V})$, such that 
		\begin{equation*}
			u_n\rightarrow u, \  ~\text{strongly in}~ \ H^1([\epsilon, 1-\epsilon]\times [\epsilon, 1-\epsilon]).
		\end{equation*}	
		Thus, we have
		\begin{equation*}
			u_n\rightarrow u  \  ~\text{strongly in}~ \  H^1(V).
		\end{equation*}	
		Hence
		\begin{equation*}
			\begin{split}
				\|u_n-u\|_{W^{1,1}(V;x^\alpha)}
				&=\left(\|u_n-u\|_{L^2(V)}^2+\|\partial_xu_n-\partial_xu\|_{L^2(V)}^2+\|x^\frac{\alpha}{2}\partial_yu_n-x^\frac{\alpha}{2}\partial_yu\|_{L^2(V)}^2\right)^{\frac{1}{2}}\\
				&\leq \|u_n-u\|_{H^1(V)}\rightarrow 0,  ~\text{as}~n\rightarrow\infty.
			\end{split}
		\end{equation*}	
		Thus we have completed the proof.	
	\end{proof}

	\begin{proposition}\label{3.5}
		Let $u\in W_0^{1,1}(\Omega;x^\alpha)$, then we have $u|_{\partial\Omega}=0$ in $L^2(\partial\Omega)$.	
	\end{proposition}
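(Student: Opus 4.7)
The plan is to approximate $u$ by test functions and invoke the standard one-dimensional Sobolev trace theorem separately on each pair of sides of $\partial\Omega$, using Lemma \ref{3.1} to cope with the degeneracy on the horizontal sides. Fix a sequence $\{u_n\}\subset C_0^\infty(\Omega)$ with $u_n\to u$ in $W^{1,1}(\Omega;x^\alpha)$; each $u_n$ vanishes identically on $\partial\Omega$, so it suffices to show that appropriate trace operators are continuous on each piece of $\partial\Omega$ and then pass to the limit.

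For the vertical sides $\{0,1\}\times[0,1]$ the $W^{1,1}(\Omega;x^\alpha)$-norm already controls both $\|u\|_{L^2(\Omega)}$ and $\|\partial_x u\|_{L^2(\Omega)}$. Viewing $u(x,y)$ as a function of $x$ with values in $L^2_y$, Fubini combined with the classical $1$-D Sobolev trace inequality shows that $u\mapsto u(0,\cdot)$ and $u\mapsto u(1,\cdot)$ are bounded into $L^2(0,1)$. Since $u_n(0,\cdot)=u_n(1,\cdot)\equiv 0$, passing to the limit gives the desired vanishing on these two sides.

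For the horizontal sides $[0,1]\times\{0,1\}$ the norm only controls the weighted quantity $x^{\alpha/2}\partial_y u$, not $\partial_y u$ itself, so the $1$-D trace theorem cannot be applied to $u$ directly. The key idea is to work instead with $v:=x^{\alpha/2}u$. By Lemma \ref{3.1}, $\partial_y v = x^{\alpha/2}\partial_y u \in L^2(\Omega)$, and $v\in L^2(\Omega)$ since $x^{\alpha/2}\leq 1$ on $\Omega$; therefore the $1$-D trace theorem in $y$ applies to $v$, giving continuous trace maps into $L^2(0,1)$. Applied to $v_n:=x^{\alpha/2}u_n\to v$, which still vanish on $\partial\Omega$, this yields $x^{\alpha/2}u(\cdot,0)=x^{\alpha/2}u(\cdot,1)=0$ in $L^2(0,1)$. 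Since $x^{\alpha/2}>0$ for $x\in(0,1]$, one can then divide pointwise a.e.\ to conclude that $u$ itself has vanishing trace on the horizontal sides in $L^2$. Together with Step~2 this assembles into $u|_{\partial\Omega}=0$ in $L^2(\partial\Omega)$.

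The main obstacle is precisely this last step: the weight destroys the $H^1$-in-$y$ regularity of $u$, so the ordinary trace theorem is unavailable. Lemma \ref{3.1} resolves this by allowing the weight to be shifted through the $y$-derivative so that $v=x^{\alpha/2}u$ acquires genuine $H^1$-regularity in $y$; recovering the vanishing of $u$ itself afterwards is a harmless pointwise division because $\alpha>0$.
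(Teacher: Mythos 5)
Your proposal is correct and follows essentially the same route as the paper: approximate $u$ by $C_0^\infty(\Omega)$ functions, control the traces on the vertical sides through $\partial_x u\in L^2(\Omega)$, and handle the horizontal sides by shifting the weight onto $u$ (i.e.\ working with $x^{\alpha/2}u$, as justified by Lemma \ref{3.1}) before dividing by the a.e.\ positive weight. The paper implements the same idea via explicit fundamental-theorem-of-calculus representations and a Cauchy-sequence/Fubini argument rather than quoting the one-dimensional trace inequality, but the substance is identical.
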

	
	\begin{proof}
		Let  $u\in W_0^{1,1}(\Omega;x^\alpha)$, then there exists a sequence $\{u_n\}_{n\in\mathbb{N}}\subset C_0^\infty(\Omega)$ such that
		\begin{equation}\label{proof3.1.3}
			\|u_n-u\|_{W^{1,1}}\rightarrow 0, ~\text{as}~ \ n\rightarrow\infty,
		\end{equation}	
		which implies that  $\{u_n\}_{n\in\mathbb{N}}\subset C_0^\infty(\Omega)$ is a Cauchy sequence in $W_0^{1,1}(\Omega;x^\alpha)$. Thus we have
		\begin{equation}\label{proof3.1.4}
			\begin{split}
				\|u_n-u_m\|_{W^{1,1}}^2=&\|u_n-u_m\|_{L^2}^2+\|\partial_xu_n-\partial_xu_m\|_{L^2}^2\\
				&+\|x^\frac{\alpha}{2}\partial_yu_n-x^\frac{\alpha}{2}\partial_yu_m\|_{L^2}^2\rightarrow 0, ~\text{as}~n,m\rightarrow\infty.
			\end{split}
		\end{equation}
		Since  $\{u_n\}_{n\in\mathbb{N}}\subset C_0^\infty(\Omega)$, for any $n\in\mathbb{N}$ and $x, y\in (0,1)$, we can deduce 
		\begin{equation}\label{proof3.1.5}
			\begin{split}
				u_n(x,y)
				&=u_n(x,y)-u_n(0,y)=\int_0^x\partial_xu_n(s,y)ds,\\
				x^\frac{\alpha}{2}u_n(x,y)
				&=x^\frac{\alpha}{2}u_n(x,y)-x^\frac{\alpha}{2}u_n(x,0)=\int_0^y\partial_y\left(x^\frac{\alpha}{2}u_n(x,s)\right)ds\\ &=\int_0^yx^\frac{\alpha}{2}\partial_yu_n(x,s)ds.
			\end{split}
		\end{equation}
		
		Next, the rest of the proof will be carried out by
		the following steps.
		
		{\it Step 1.} On one hand,
		by \eqref{proof3.1.5}, we have
		\begin{equation*}
			\int_0^1|u_n(x,y)|^2 dy=\int_0^1\left|\int_0^x\partial_xu_n(s,y) ds\right|^2 dy\leq \iint_\Omega |\partial_xu_n(x,y)|^2 dxdy.
		\end{equation*}
		According to \eqref{proof3.1.4}, we hold
		\begin{equation*}
			\begin{split}
				\|u_n(x,\cdot)-u_m(x,\cdot)\|^2_{L^2(0,1)}&\leq \|\partial_x u_n-\partial_xu_m\|^2_{L^2(\Omega)}\\
				&\leq \|u_n-u_m\|^2_{W^{1,1}}\rightarrow 0, ~\text{as}~ n,m\rightarrow\infty.
			\end{split}
		\end{equation*}
		Therefore, there exists $v(x,\cdot)\in L^2(0,1)$ such that for any  $x\in (0,1)$,  we have
		\begin{equation}\label{proof3.1.6}
			\|u_n(x,\cdot)-v(x,\cdot)\|_{L^2(0,1)}\rightarrow 0, ~\text{as}~ n\rightarrow\infty.
		\end{equation}
		
		On the other hand, from \eqref{proof3.1.3} and  Fubini theorem, it follows that
		\begin{equation*}
			\begin{split}
				\int_0^1\left(\int_0^1|u_n(x,y)-u(x,y)|^2 dy\right)dx=\|u_n-u\|_{L^2(\Omega)}^2\rightarrow 0.
			\end{split}
		\end{equation*}
		This shows there exists a subsequence of  $\{u_n\}_{n\in\mathbb{N}}$, still denoted by itself, and  $E\subset (0,1)$ with $|E|=0$ such that for any  $x\in (0,1)-E$, 
		\begin{equation*}
			\|u_n(x,\cdot)-u(x,\cdot)\|_{L^2(0,1)}^2=\int_0^1|u_n(x,y)-u(x,y)|^2 dy\rightarrow 0  .
		\end{equation*}
		By \eqref{proof3.1.6}, for any $x\in (0,1)-E$, we have
		\begin{equation*}
			u(x,\cdot)=v(x,\cdot).
		\end{equation*}
		Since $|E\times[0,1]|=0$, so we can assume that $u=v$ in $\Omega$. Then we have
		\begin{equation*}
			u(0,\cdot)=\lim_{n\rightarrow\infty}u_n(0,\cdot)=0, 
		\end{equation*}
		\begin{equation*}
			u(1,\cdot)=\lim_{n\rightarrow\infty}u_n(1,\cdot)=0,
		\end{equation*}
		in $L^2(0,1)$.
		
		{\it Step 2.}
		From \eqref{proof3.1.5} and  Cauchy inequality, for any $y\in (0,1)$, we have
		\begin{equation*}
			\int_0^1|x^\frac{\alpha}{2}u_n(x,y)|^2 dx=\int_0^1\left|\int_0^y x^\frac{\alpha}{2}\partial_yu_n(x,s) ds\right|^2 dx\leq \iint_\Omega \left|x^\frac{\alpha}{2}\partial_yu_n(x,y)\right|^2 dydx,
		\end{equation*} 	
		and by \eqref{proof3.1.4} again, we hold
		\begin{equation*}
			\begin{split}
				\int_0^1\left|x^\frac{\alpha}{2}u_n(x,y)-x^\frac{\alpha}{2}u_m(x,y)\right|^2 dx
				&\leq \iint_\Omega \left|x^\frac{\alpha}{2}\partial_yu_n(x,y)-x^\frac{\alpha}{2}\partial_yu_m(x,y)\right|^2 dydx\\
				&=\|x^\frac{\alpha}{2}\partial_yu_n-x^\frac{\alpha}{2}\partial_yu_m\|_{L^2(\Omega)}^2\rightarrow 0,~\text{as}~n,m\rightarrow\infty.
			\end{split}
		\end{equation*}
		Thus, for any $y\in (0,1)$, there exists  $w(\cdot,y)\in L^2(0,1)$ such that
		\begin{equation}\label{proof3.1.7}
			\|x^\frac{\alpha}{2}u_n(\cdot,y)-w(\cdot, y)\|_{L^2(0,1)}^2=\int_0^1 |x^\frac{\alpha}{2}u_n(x,y)-w(x,y)|^2 dx\rightarrow 0, ~\text{as}~n\rightarrow\infty.
		\end{equation}	
		Moreover, from \eqref{proof3.1.3},
		\begin{equation*}
			\begin{split}
				\int_0^1\left(\int_0^1|x^\frac{\alpha}{2}u_n(x,y)-x^\frac{\alpha}{2}u(x, y)|^2 dx\right)dy
				&\leq \int_0^1\left(\int_0^1|u_n(x,y)-u(x, y)|^2 dx\right)dy\\
				&=\|u_n-u\|_{L^2(\Omega)}^2\rightarrow 0.
			\end{split}
		\end{equation*}
		Hence,  There exists a subsequence of $\{u_n\}_{n\in\mathbb{N}}$, still denoted by itself,  and $E\subset (0,1)$, with $ |E|=0$ such that for any $y\in (0,1)-E$, 
		\begin{equation*}
			\|x^\frac{\alpha}{2}u_n(\cdot,y)-x^\frac{\alpha}{2}u(\cdot,y)\|_{L^2(0,1)}^2=\int_0^1|x^\frac{\alpha}{2}u_n(x, y)-x^\frac{\alpha}{2}u(x,y)|^2 dx\rightarrow 0 .
		\end{equation*}
		Combining with  \eqref{proof3.1.7}, it follows that for any $y\in (0,1)-E$,
		\begin{equation*}
			x^\frac {\alpha}{2} u(\cdot,y)=w(\cdot,y).
		\end{equation*}
		
		Further, for any $\xi, \zeta\in (0,1)-E$, we have
		\begin{equation*}
			\begin{split}
				\|x^\frac{\alpha}{2}u(\cdot, \xi)-x^\frac{\alpha}{2}u(\cdot,\zeta)\|_{L^2(0,1)}
				\leq &\|x^\frac{\alpha}{2}u(\cdot, \xi)-x^\frac{\alpha}{2}u_n(\cdot, \xi)\|_{L^2(0,1)}\\
				&+\|x^\frac{\alpha}{2}u_n(\cdot, \xi)-x^\frac{\alpha}{2}u_n(\cdot, \zeta)\|_{L^2(0,1)}\\
				&+\|x^\frac{\alpha}{2}u_n(\cdot,\zeta)-x^\frac{\alpha}{2}u(\cdot, \zeta)\|_{L^2(0,1)}\\
				\leq &2\|u-u_n\|_{W^{1,1}}+\|x^\frac{\alpha}{2}u_n(\cdot, \xi)-x^\frac{\alpha}{2}u_n(\cdot, \zeta)\|_{L^2(0,1)},
			\end{split}
		\end{equation*}
		Notice that $|E\times[0,1]|=0$, so we can assume that $x^\frac{\alpha}{2}u=w$ in $\Omega$. Then we have
		\begin{equation*}
			x^\frac{\alpha}{2}u(\cdot,0)=\lim_{n\rightarrow\infty}x^\frac{\alpha}{2}u_n(\cdot,0)=0,
		\end{equation*}
		\begin{equation*}
			x^\frac{\alpha}{2}u(\cdot,1)=\lim_{n\rightarrow\infty}x^\frac{\alpha}{2}u_n(\cdot,1)=0.
		\end{equation*}
		in $L^2(0,1)$.
		Hence,
		\begin{equation*}
			u(\cdot,0)=0 ,  u(\cdot,1)=0 .
		\end{equation*}
		in $ L^2(0,1)$. The proof is completed.
	\end{proof}
	
	\begin{proposition}
		$(V(\Omega),(\cdot,\cdot)_V)$ is a  Hilbert space.	
	\end{proposition}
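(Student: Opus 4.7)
The plan is to mirror the proof of Proposition \ref{3.2} (completeness of $W^{1,1}(\Omega;x^\alpha)$), adding just one extra component, namely the mixed derivative $\partial_x\partial_y$. First I would verify that $(V(\Omega),(\cdot,\cdot)_V)$ is an inner product space: since $V(\Omega)\subset W^{1,1}(\Omega;x^\alpha)$ and the pairing $(\partial_x\partial_y u,\partial_x\partial_y v)_{L^2}$ is bilinear, symmetric and non-negative, and since the induced norm $\|u\|_V=0$ forces $\|u\|_{W^{1,1}}=0$ and hence $u=0$, the inner product axioms are immediate. This step is a formality.

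For completeness, let $\{u_n\}_{n\in\N}\subset V(\Omega)$ be Cauchy in $\|\cdot\|_V$. Then $\{u_n\}$ is Cauchy in $W^{1,1}(\Omega;x^\alpha)$ and $\{\partial_x\partial_y u_n\}$ is Cauchy in $L^2(\Omega)$. By Proposition \ref{3.2} there exists $u_0\in W^{1,1}(\Omega;x^\alpha)$ with $u_n\to u_0$ in $W^{1,1}(\Omega;x^\alpha)$, and by completeness of $L^2(\Omega)$ there exists $z_0\in L^2(\Omega)$ with $\partial_x\partial_y u_n\to z_0$ in $L^2(\Omega)$.

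The decisive step is to identify $z_0$ with $\partial_x\partial_y u_0$ in the distributional sense. For any $\varphi\in C_0^\infty(\Omega)$, integration by parts twice gives
\begin{equation*}
\iint_\Omega (\partial_x\partial_y u_n)\varphi\,dxdy=\iint_\Omega u_n(\partial_x\partial_y\varphi)\,dxdy.
\end{equation*}
The $L^2$-convergence $u_n\to u_0$ (from $W^{1,1}$-convergence) and $\partial_x\partial_y u_n\to z_0$ let us pass to the limit on both sides, yielding
\begin{equation*}
\iint_\Omega z_0\,\varphi\,dxdy=\iint_\Omega u_0(\partial_x\partial_y\varphi)\,dxdy,
\end{equation*}
so $\partial_x\partial_y u_0=z_0$ in $\mathcal{D}'(\Omega)$, hence in $L^2(\Omega)$. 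Consequently $u_0\in V(\Omega)$ and
\begin{equation*}
\|u_n-u_0\|_V^2=\|u_n-u_0\|_{W^{1,1}}^2+\|\partial_x\partial_y u_n-\partial_x\partial_y u_0\|_{L^2}^2\to 0,
\end{equation*}
which gives $u_n\to u_0$ in $V(\Omega)$, completing the proof.

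There is no serious obstacle here; the only subtlety worth being careful about is the distributional identification $\partial_x\partial_y u_0=z_0$, and even that is handled by the standard test-function argument above. The whole proof is parallel in structure to Proposition \ref{3.2}, with the mixed second-order term playing the role that the first-order terms $\partial_x u$ and $x^{\alpha/2}\partial_y u$ played there.
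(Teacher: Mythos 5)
Your proposal is correct and follows essentially the same argument as the paper: pass to Proposition \ref{3.2} for the $W^{1,1}$ part, use completeness of $L^2(\Omega)$ for the mixed derivative, identify the limit $z_0=\partial_x\partial_y u_0$ by testing against $\varphi\in C_0^\infty(\Omega)$ and integrating by parts, and conclude convergence in $\|\cdot\|_V$. No discrepancies worth noting.
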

	\begin{proof}
		On one hand, it is easy to know that  $(V(\Omega),(\cdot,\cdot)_V)$ is a inner product space. On the other hand, if  $\{u_n\}_{n\in\mathbb{N}}\subset V(\Omega)$ is a Cauchy sequence, then
		\begin{equation*}
			\|u_n-u_m\|_{V}\rightarrow 0, ~\text{as}~n,m\rightarrow\infty.
		\end{equation*}
		Thus,
		\begin{equation*}
			\|u_n-u_m\|_{W^{1,1}}\rightarrow 0,\quad \|\partial_x\partial_y u_n-\partial_x\partial_yu_m\|_{L^2}\rightarrow 0, ~\text{as}~n,m\rightarrow\infty.
		\end{equation*}
		By Proposition \ref{3.2}, there exists $u\in W^{1,1}(\Omega;x^\alpha)$ and $w\in L^2(\Omega)$ such that
		\begin{equation*}
			\partial_x\partial_yu_n\rightarrow w~\text{strongly in}~  L^2(\Omega),
		\end{equation*}
		and 
		\begin{equation*}
			u_n\rightarrow u \text{ strongly in }  W^{1,1}(\Omega;x^\alpha).
		\end{equation*}
		Thus, for any $\varphi\in C_0^\infty(\Omega)$, we have
		\begin{equation*}
			\begin{split}
				\iint_\Omega w\varphi dxdy
				\leftarrow\iint_\Omega \left(\partial_x\partial_yu_n\right)\varphi dxdy
				&=\iint_\Omega u_n(\partial_x\partial_y\varphi)dxdy\\
				&\rightarrow\iint_\Omega u(\partial_x\partial_y\varphi)dxdy=\iint_\Omega (\partial_x\partial_yu)\varphi dxdy.
			\end{split}
		\end{equation*}
		Hence, $w=\partial_x\partial_yu$, and 
		\begin{equation*}
			\|u_n-u\|_{V}\rightarrow 0, ~\text{as}~  n\rightarrow\infty.
		\end{equation*}
		Therefore, $(V(\Omega), \|\cdot\|_{V(\Omega)})$ is a  Banach space. Further we can obtain that $(V(\Omega),(\cdot,\cdot)_V)$ is a  Hilbert space.
	\end{proof}
	
	\begin{remark}
		By the definition of $V_0(\Omega)$ and $W_0^{1,1}(\Omega;x^\alpha)$, we have $C_{0}^\infty(\Omega)\subset V_0(\Omega)$, and 
		$V_0(\Omega)$ is dense in $W_0^{1,1}(\Omega;x^\alpha)$.	
	\end{remark}
	
	\begin{lemma}\label{3.7}
		Let $u\in V_{0}(\Omega)$, then $(\partial_x u)(\cdot,0)=0$ in $L^2(0,1)$.
	\end{lemma}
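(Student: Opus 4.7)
The proof parallels Step 1 of Proposition \ref{3.5}, applied now to $\partial_x u$ and the boundary segment $\{y=0\}$. First I would pick $\{u_n\}\subset C_0^\infty(\Omega)$ with $u_n\to u$ in $V(\Omega)$, which gives $\partial_x u_n\to\partial_x u$ and $\partial_x\partial_y u_n\to\partial_x\partial_y u$ in $L^2(\Omega)$. Since each $u_n$ is compactly supported in $\Omega$, we have $(\partial_x u_n)(x,0)=0$ for all $x\in(0,1)$, and the fundamental theorem of calculus yields
$$(\partial_x u_n)(x,y)=\int_0^y \partial_x\partial_y u_n(x,s)\,ds,\qquad (x,y)\in\Omega.$$

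A Cauchy–Schwarz estimate on this identity then gives, uniformly in $y\in[0,1]$,
$$\|(\partial_x u_n)(\cdot,y)-(\partial_x u_m)(\cdot,y)\|_{L^2(0,1)}^2\leq y\,\|\partial_x\partial_y u_n-\partial_x\partial_y u_m\|_{L^2(\Omega)}^2\longrightarrow 0$$
as $n,m\to\infty$. Hence there exists a map $v\colon[0,1]\to L^2(0,1)$ with $(\partial_x u_n)(\cdot,y)\to v(\cdot,y)$ uniformly in $y\in[0,1]$. The very same estimate applied between two levels $y,y'$ shows $y\mapsto(\partial_x u_n)(\cdot,y)$ is H\"older-$1/2$ into $L^2(0,1)$, and since the convergence is uniform, $y\mapsto v(\cdot,y)$ is continuous on $[0,1]$.

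Next I identify $v$ with $\partial_x u$ in the a.e. sense. By Fubini and $\partial_x u_n\to\partial_x u$ in $L^2(\Omega)$, along a subsequence $(\partial_x u_n)(\cdot,y)\to(\partial_x u)(\cdot,y)$ in $L^2(0,1)$ for a.e. $y\in(0,1)$; combined with the uniform limit this forces $v(\cdot,y)=(\partial_x u)(\cdot,y)$ for a.e. $y$. Thus $v$ serves as the continuous $L^2(0,1)$-valued representative of $y\mapsto(\partial_x u)(\cdot,y)$. Finally, since $(\partial_x u_n)(\cdot,0)=0$ for every $n$, uniform convergence at $y=0$ yields $v(\cdot,0)=0$, which is the desired conclusion $(\partial_x u)(\cdot,0)=0$ in $L^2(0,1)$.

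I do not anticipate a serious obstacle: the scheme mirrors the proof of Proposition \ref{3.5} almost verbatim, with $\partial_x u_n$ replacing $u_n$ and $\partial_x\partial_y u_n$ playing the role that $\partial_x u_n$ had there. The one subtle point—selecting a continuous-in-$y$ representative and reconciling it with the a.e.-defined slice $(\partial_x u)(\cdot,y)$—is resolved by exactly the subsequence-and-Fubini argument already used above.
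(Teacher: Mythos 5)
Your proposal is correct and follows essentially the same route as the paper's own proof: approximate by $C_0^\infty$ functions converging in $V(\Omega)$, write $\partial_x u_n(x,y)=\int_0^y\partial_x\partial_y u_n(x,s)\,ds$, use Cauchy--Schwarz to get uniform-in-$y$ Cauchy estimates in $L^2(0,1)$, identify the limit with $\partial_x u(\cdot,y)$ via Fubini and a subsequence, and evaluate at $y=0$. Your explicit remark on the continuous-in-$y$ representative even makes the identification step slightly cleaner than the paper's wording, but the argument is the same.
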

	
	\begin{proof}
		Let  $u\in V_{0}(\Omega)$. Then there exists the sequence $\{u_n\}_{n\in\mathbb{N}}\subset C_{0}^\infty(\Omega)$ such that
		\begin{equation}\label{proof3.1.8}
			u_n\rightarrow u ,\quad \|u_n-u\|_{V}\rightarrow 0 \ \mbox{in} \  V_{0}(\Omega).
		\end{equation}
		For any  $n\in\mathbb{N}$, let  $w_n=\partial_xu_n$, then for any $y\in (0,1)$,
		\begin{equation*}
			w_n(x,y)=w_n(x,y)-w_n(x,0)=\int_0^y\partial_yw_n(x,s)ds.
		\end{equation*}
		By Cauchy inequality, we have
		\begin{equation*}
			\begin{split}
				\int_0^1|w_n(x,y)|^2 dx
				&=\int_0^1\left|\int_0^y\partial_yw_n(x,s)ds\right|^2 dx\\
				&\leq \iint_\Omega |\partial_yw_n(x,y)|^2 dxdy=\iint_\Omega |[\partial_y(\partial_xu_n)](x,y)|^2 dxdy.
			\end{split}
		\end{equation*}
		Hence, For any $y\in (0,1)$, when $n,m\rightarrow\infty$,
		\begin{equation*}
			\begin{split}
				\|w_n(\cdot,y)-w_m(\cdot,y)\|_{L^2(0,1)}
				&\leq \|[\partial_y(\partial_xu_n)](x,y)-[\partial_y(\partial_xu_m)](x,y)\|_{L^2(\Omega)}\\
				&\leq \|u_n-u_m\|_{V}\rightarrow 0.
			\end{split}
		\end{equation*}
		Thus, for any $y\in (0,1)$, there exists $w(\cdot, y)\in L^2(0,1)$ such that  
		\begin{equation*}
			w_n(\cdot, y)\rightarrow w(\cdot, y) \ \mbox{in} \ L^2(0,1) .
		\end{equation*}
		By \eqref{proof3.1.8}, we have
		\begin{equation*}
			\begin{split}
				\int_0^1\left(\int_0^1|w_n(x,y)-\partial_xu(x,y)|^2\ dx\right)dy
				&=\|\partial_xu_n-\partial_xu\|_{L^2(\Omega)}^2\rightarrow 0.
			\end{split}
		\end{equation*}
		So there exists a subsequence of $\{w_n\}_{n\in\mathbb{N}}$, still denoted by itself, and $E\subset (0,1)$ with $|E|=0$ such that for any  $y\in (0,1)-E$,
		\begin{equation*}
			w_n(\cdot,y)\rightarrow\partial_xu(\cdot,y)  \ \mbox{in} \ L^2(0,1).
		\end{equation*}
		This shows that for any $y\in (0,1)-E$,
		\begin{equation*}
			w(\cdot, y)=\partial_xu(\cdot, y) \ \mbox{in} \ L^2(0,1),
		\end{equation*}
		and $|E\times[0,1]|=0$, so for any   $(x,y)\in \Omega$ we can set $w=\partial_xu$, then
		\begin{equation*}
			\partial_xu(\cdot, 0)=\lim_{n\rightarrow\infty}\partial_xu_n(\cdot,0)=0 \ \mbox{in} \ L^2(0,1).
		\end{equation*}
		Thus we have completed the proof.
	\end{proof}
	
	\section{Well-posedness of the degenerate problem}
	In this section, we discuss the well-posedness of the degenerate problems \eqref{1.1}.
	To make the subsequent proofs easier, some equivalent forms of \eqref{3.2.9} are to be applied, and the following lemma will give equivalent definitions of several weak solutions.
	\begin{lemma}\label{3.9}
		Let $u\in W_{0}^{1,1}(\Omega;x^\alpha)$. For any $\varphi\in C_{0}^\infty(\Omega)$, we have
		\begin{equation}\label{3.2.10}
			\iint_\Omega \Big((x^\alpha\partial_yu)(\partial_y\varphi)+\frac{1}{2}(\partial_xu) \big[\partial_x(\partial_y\varphi)\big]\Big)dxdy=\iint_\Omega f(\partial_y\varphi)dxdy.
		\end{equation}	
	\end{lemma}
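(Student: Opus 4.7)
The approach is to recognize that identity \eqref{3.2.10} is simply the defining weak formulation \eqref{3.2.9} applied to a specific test function, namely $\partial_y\varphi$ in place of $\varphi$. Since \eqref{3.2.9} is required (by the hypothesis that $u$ is a weak solution) to hold for every test function in $C_0^\infty(\Omega)$, the entire proof reduces to verifying that $\partial_y\varphi$ is admissible whenever $\varphi$ is.

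That verification is immediate. If $\varphi \in C_0^\infty(\Omega)$, then $\partial_y\varphi$ is smooth and $\supp(\partial_y\varphi) \subset \supp(\varphi)$, which is a compact subset of $\Omega$. Hence $\partial_y\varphi \in C_0^\infty(\Omega)$ and is a legitimate test function in Definition \ref{3.8}. Substituting it for $\varphi$ in \eqref{3.2.9} yields
\begin{equation*}
\iint_{\Omega}\left[(x^\alpha\partial_yu)(\partial_y\varphi)+\tfrac{1}{2}(\partial_xu)\bigl(\partial_x(\partial_y\varphi)\bigr)\right]dxdy=\iint_\Omega f(\partial_y\varphi)\,dxdy,
\end{equation*}
which is exactly \eqref{3.2.10}.

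There is no genuine obstacle in the argument; the lemma is a bookkeeping reformulation whose utility lies in the shape of the test pairing. Writing the weak equation against $\partial_y\varphi$ prepares the way for an integration by parts in $y$ that later transfers a $\partial_y$ onto $u$, producing the mixed derivative $\partial_x\partial_y u$. This is precisely the regularity that becomes available once one asks $u$ to belong to the finer space $V_0(\Omega)$ used in the subsequent well-posedness proof, so the only thing to be careful about is simply that no extra density or approximation step is needed at this stage, since we are still working with smooth, compactly supported test functions.
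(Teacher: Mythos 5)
Your argument is correct and coincides with the paper's own first step: since $\partial_y\varphi$ is smooth with $\supp(\partial_y\varphi)\subset\supp(\varphi)\subset\subset\Omega$, it is an admissible test function, and substituting it into \eqref{3.2.9} gives \eqref{3.2.10} at once (your reading of the hypothesis, namely that $u$ is a weak solution in the sense of Definition \ref{3.8}, is indeed the intended one, even though the lemma's wording omits it). The only caveat is that the paper treats this lemma as an \emph{equivalence} of weak formulations (the text before it speaks of ``equivalent definitions''), and its proof contains a second step proving the converse: assuming \eqref{3.2.10} holds for all $\varphi\in C_0^\infty(\Omega)$, one takes, for an arbitrary $\psi\in C_0^\infty(\Omega)$, the function $\varphi(x,y)=\int_0^y\psi(x,s)\,ds$, so that $\partial_y\varphi=\psi$ and \eqref{3.2.9} is recovered. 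That reverse direction is the one actually used later (through Lemma \ref{3.10}) in the proof of Theorem \ref{3.12}, to pass from the identity produced by the generalized Lax--Milgram argument back to the weak formulation. So as a proof of the literal one-directional statement your proposal is complete and identical in approach to the paper; if the lemma is meant as the equivalence the paper relies on, you would still need to supply the converse step.
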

	\begin{proof}
		Let us prove this conclusion into two parts.
		
		{\it Step 1.}
		Let $u\in W_{0}^{1,1}(\Omega;x^\alpha)$ satisfying \eqref{3.2.9}. For any $\varphi\in C_{0}^\infty(\Omega)$, we have $\partial_y\varphi\in C_{0}^\infty(\Omega)$, Replacing \ $\varphi$ with $\partial_y\varphi$, it implies that \eqref{3.2.10} holds.
		
		{\it Step 2.}
		If $u\in W_{0}^{1,1}(\Omega;x^\alpha)$ satisfying \eqref{3.2.10},  for any  $\psi\in C_{0}^\infty(\Omega)$, let
		\begin{equation*}
			\varphi(x,y)=\int_0^y\psi(x,s)ds.
		\end{equation*}
		Then we have  $\varphi\in C_{0}^\infty(\Omega)$, which can be taken as the test function in \eqref{3.2.10}. Thus,
		\begin{equation*}
			\begin{split}
				\iint_\Omega f\psi dxdy
				&=
				\iint_\Omega f(\partial_y\varphi)dxdy\\
				&=\iint_\Omega \left((x^\alpha\partial_yu)(\partial_y\varphi)+\frac{1}{2}(\partial_xu) \big[\partial_x(\partial_y\varphi)\big]\right)dxdy \\
				&=\iint_\Omega \left((x^\alpha\partial_yu)\psi+\frac{1}{2}(\partial_xu)(\partial_x\psi)\right)dxdy.
			\end{split}
		\end{equation*}
		By the arbitrariness of $\psi$, we can deduce \eqref{3.2.9}. Thus, $u$ is a weak solution of equation \eqref{1.1}. The proof is completed.
	\end{proof}
	
	\begin{lemma}\label{3.10}
		Let $u\in W_{0}^{1,1}(\Omega;x^\alpha)$. For any $\varphi\in C_{0}^\infty(\Omega)$, we have
		\begin{equation}\label{3.2.11}
			\begin{split}
				\iint_\Omega \left((x^\alpha\partial_yu)(\partial_y\varphi)+\frac{1}{2}(\partial_xu)\big[ \partial_x(\partial_y\varphi)\big]\right)e^{-\theta y} dxdy
				=\iint_\Omega f(\partial_y\varphi)e^{-\theta y}dxdy.
			\end{split}
		\end{equation}
	\end{lemma}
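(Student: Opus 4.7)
The plan is to derive identity \eqref{3.2.11} directly from the weak formulation \eqref{3.2.9} by choosing an appropriate test function that absorbs the exponential factor $e^{-\theta y}$. Note that for the right-hand side of \eqref{3.2.11} to be meaningful, the lemma implicitly assumes $u$ is a weak solution of \eqref{1.1} in the sense of Definition \ref{3.8}, analogous to the implicit setup of Lemma \ref{3.9}.

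Given any $\varphi \in C_0^\infty(\Omega)$, I would set
\[
\psi(x,y) := e^{-\theta y}\,\partial_y\varphi(x,y).
\]
Since $\partial_y\varphi \in C_0^\infty(\Omega)$ and $y\mapsto e^{-\theta y}$ is smooth, the product $\psi$ again belongs to $C_0^\infty(\Omega)$ and hence is admissible as a test function in \eqref{3.2.9}. Substituting $\psi$ into \eqref{3.2.9} yields
\[
\iint_\Omega \Big[(x^\alpha \partial_y u)\,\psi + \tfrac{1}{2}(\partial_x u)(\partial_x \psi)\Big]\,dxdy = \iint_\Omega f\,\psi\,dxdy.
\]

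The key elementary observation is that $e^{-\theta y}$ is independent of $x$, so
\[
\partial_x \psi = \partial_x\!\left(e^{-\theta y}\,\partial_y\varphi\right) = e^{-\theta y}\,\partial_x\partial_y\varphi.
\]
Plugging this identity back in and factoring $e^{-\theta y}$ out of each integrand immediately produces \eqref{3.2.11}.

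I do not foresee any serious obstacle: the lemma is really a mild reweighting of Lemma \ref{3.9}, and the whole argument reduces to one choice of test function plus the trivial commutation of $\partial_x$ with multiplication by the $y$-only factor $e^{-\theta y}$. The reason for bothering to state the weighted form separately is presumably that the $e^{-\theta y}$ factor will be used downstream (in Section 4) to recover coercivity of the bilinear form and thereby invoke the generalized Lax--Milgram theorem alluded to in the remarks preceding Theorem \ref{3.12}.
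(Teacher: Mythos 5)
Your argument for the stated identity is exactly the paper's: take $\psi=e^{-\theta y}\partial_y\varphi\in C_0^\infty(\Omega)$ as the test function in \eqref{3.2.9} and use that $e^{-\theta y}$ is independent of $x$, so $\partial_x\psi=e^{-\theta y}\partial_x\partial_y\varphi$; you also correctly make explicit the implicit hypothesis that $u$ is a weak solution, without which $f$ does not appear in the statement. The one difference is scope: the paper's proof of Lemma \ref{3.10} is written as an equivalence. Besides the direction you prove, it also shows the converse — if $u\in W_0^{1,1}(\Omega;x^\alpha)$ satisfies \eqref{3.2.11} for all $\varphi\in C_0^\infty(\Omega)$, then $u$ is a weak solution — by inserting the test function $\varphi(x,y)=\psi(x,y)e^{\theta y}-\theta\int_0^y\psi(x,s)e^{\theta s}\,ds$ for arbitrary $\psi\in C_0^\infty(\Omega)$ (so that $\partial_y\varphi=e^{\theta y}\partial_y\psi$, reducing \eqref{3.2.11} to \eqref{3.2.10}) and then invoking Lemma \ref{3.9}. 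That converse is the half actually used later: in the proof of Theorem \ref{3.12}, Lax--Milgram produces a $u$ satisfying the weighted identity \eqref{proof3.7}, and Lemma \ref{3.10} is cited to conclude that this $u$ is a weak solution of \eqref{1.1}. So your proof is correct for the literal statement, but if the lemma is to do the job it is put to in Section 4, you would need to add the reverse implication along the lines above.
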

	\begin{proof}
		On one hand, let $u\in W_{0}^{1,1}(\Omega;x^\alpha)$ satisfying \eqref{3.2.9}. For any $\varphi\in C_{0}^\infty(\Omega)$, we have $(\partial_y\varphi)e^{-\theta y}\in C_{0}^\infty(\Omega)$, Replacing \ $\varphi$ with $(\partial_y\varphi)e^{-\theta y}$, it shows that \eqref{3.2.11} holds.
		
		On the other hand, if for any $\varphi\in C_{0}^\infty(\Omega)$, $u\in W_{0}^{1,1}(\Omega;x^\alpha)$ satisfying \eqref{3.2.10}. For any  $\psi\in C_{0}^\infty(\Omega)$, let
		\begin{equation*}
			\varphi(x,y)=\psi(x,y)e^{\theta y}-\theta \int_0^y\psi(x,s)e^{\theta s} ds.
		\end{equation*}
		Then we have  $\varphi\in C_{0}^\infty(\Omega)$, which can be taken as the test function in \eqref{3.2.11}. Therefore,
		\begin{equation*}
			\iint_\Omega \Big((x^\alpha\partial_yu)(\partial_y\varphi)+\frac{1}{2}(\partial_xu) \big[\partial_x(\partial_y\varphi)\big]\Big)dxdy=\iint_\Omega f(\partial_y\varphi)dxdy.
		\end{equation*}
		by the arbitrariness of $\psi$, and Lemma \ref{3.9}, we can deduce \eqref{3.2.9}. Thus, $u\in W_{0}^{1,1}(\Omega;x^\alpha)$ is a weak solution of equation \eqref{1.1}. The proof is completed.
	\end{proof}
	To prove the existence of weak solutions, we first show the following lemmas.
	\begin{lemma}\label{3.11}
		Let $(H, (\cdot,\cdot)_H)$ be a Hilbert space, and $(V, |\cdot|_V)$ be a dense subspace of $H$. Let $T: V\rightarrow H$ be a bounded linear operator, and let $T^{-1}: R(T)\rightarrow V$ exist and be bounded with respect to the norm $|\cdot|_A$. Then, the range of the adjoint operator $T^*$ is the entire space $H$, i.e., $R(T^*)=H$.
		
	\end{lemma}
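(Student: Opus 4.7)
The plan is to prove $R(T^*)=H$ by producing, for each $h_0\in H$, an explicit preimage $\phi\in H$ with $T^*\phi=h_0$. The construction uses only Hahn--Banach extension and the Riesz representation theorem, with the boundedness of $T^{-1}$ entering at exactly one step to give the required a priori estimate.

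Fix $h_0\in H$ arbitrary. On the range $R(T)\subset H$ I would define the linear functional
\[
\ell(Tv):=(v,h_0)_H, \qquad v\in V.
\]
This is well-defined because the existence of $T^{-1}$ on $R(T)$ forces $T$ to be injective, so $v$ is uniquely determined by $Tv$. The next step is to check $\ell$ is bounded on $R(T)$ with respect to the norm $\|\cdot\|_H$. Here I use the continuous embedding $V\hookrightarrow H$ (implicit in the phrase "dense subspace of $H$") together with boundedness of $T^{-1}$: if $\|v\|_H\leq C_0|v|_V$ and $|v|_V\leq \|T^{-1}\|\,\|Tv\|_H$, then
\[
|\ell(Tv)|\leq \|v\|_H\|h_0\|_H\leq C_0\|T^{-1}\|\,\|h_0\|_H\,\|Tv\|_H.
\]

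Next I would apply the Hahn--Banach theorem to extend $\ell$ to a bounded linear functional $\tilde\ell$ on the whole Hilbert space $H$ with the same bound. The Riesz representation theorem then furnishes a unique $\phi\in H$ such that $\tilde\ell(w)=(w,\phi)_H$ for every $w\in H$. Specializing to $w=Tv$, $v\in V$, yields
\[
(\phi,Tv)_H=(Tv,\phi)_H=\ell(Tv)=(v,h_0)_H \qquad \text{for all } v\in V.
\]
Because the map $v\mapsto (v,h_0)_H$ is continuous in $v$ with respect to $\|\cdot\|_H$ (with constant $\|h_0\|_H$), this identity says precisely that $\phi$ lies in the domain of the adjoint $T^*$ and that $T^*\phi=h_0$ (density of $V$ in $H$ guarantees uniqueness in the defining adjoint relation). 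Since $h_0$ was arbitrary, $R(T^*)=H$.

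The only genuinely delicate point in this plan is the interplay between the three norms in the problem: $|\cdot|_V$ controls $T$, the operator norm of $T^{-1}$ controls $|v|_V$ by $\|Tv\|_H$, and I additionally need $\|v\|_H$ bounded by $|v|_V$ in order to make $\ell$ continuous on $R(T)$ in the ambient $H$-norm (which is what Hahn--Banach requires). That last estimate is the key hidden hypothesis; assuming the standard convention that $(V,|\cdot|_V)$ embeds continuously into $(H,\|\cdot\|_H)$, the argument goes through cleanly, and the rest is a routine invocation of Hahn--Banach plus Riesz.
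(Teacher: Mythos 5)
Your proposal is correct and follows essentially the same route as the paper: both define the functional $z\mapsto (T^{-1}z,h)_H$ on $R(T)$, show it is bounded using the boundedness of $T^{-1}$, obtain a representing element $\phi\in H$ via Riesz (you extend by Hahn--Banach to all of $H$, the paper extends by continuity to $\overline{R(T)}$ and applies Riesz there), and read off $T^*\phi=h$ from the density of $V$ in $H$. The only divergence is norm bookkeeping: the paper reads the (ambiguously stated) hypothesis as $\|T^{-1}z\|_H\le C\|z\|_H$, which gives the bound on the functional directly, whereas you route it through $|T^{-1}z|_V$ plus a continuous embedding $V\hookrightarrow H$; either reading yields the same estimate, so this is not a gap.
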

	
	\begin{proof}
		It suffices to prove that for any given $h\in H$, there exists $u\in H$ such that $T^*u=h$. To this end, we define a linear functional
		\begin{equation*}
			F: R(T)=D(T^{-1})\rightarrow \mathbb{R}  , \ z\mapsto (h,T^{-1}z)_H,
		\end{equation*}
		then
		\begin{equation*}
			\|F\|=\sup_{\|z\|=1}|F(z)|\leq \|h\|\cdot \|T^{-1}\|.
		\end{equation*}
		Thus, $F$ is a bounded linear functional on $R(T)$. We extend $F$ to a bounded linear functional on $\overline{R(T)}$. Moreover, we know that $\overline{R(T)}\subset H$ is a Hilbert space. Therefore, by the Riesz representation theorem, there exists $u\in \overline{R(T)}$ such that
		\begin{equation*}
			(u,z)_H=F(z)=(h,T^{-1}z)_H, \ z\in R(T).
		\end{equation*}
		Thus
		\begin{equation*}
			(u,Ty)_H=(h,y)_H, \ y\in V,
		\end{equation*}
		i.e.
		\begin{equation*}
			\langle T^*u,y\rangle_{V^*,V}=\langle u,Ty\rangle_{H^*,H}=(u,Ty)_H=(h, y)_H, \ y\in V.
		\end{equation*}
		Hence, due to the density of $V$ in $H$, it follows that $T^*u$ is also a bounded linear operator on $H$. Moreover,
		\begin{equation*}
			(T^*u, y)_H=(h,y)_H, \ y\in H.
		\end{equation*}
		Therefore, $T^*u=h$. Hence, we have $R(T^*)=H$. The proof is completed.
	\end{proof}
	\begin{lemma}\label{3.12L}
		Let $(H,(\cdot,\cdot)_H)$ be a Hilbert space, and $(V,|\cdot|_V)$ be a dense subspace of $H$. Let $a: H\times V\rightarrow \mathbb{R}$ be a bilinear functional satisfying the following conditions:
		\begin{enumerate}
			\item [\rm (1)]
			There exists a constant $M>0$ such that for any $u\in H$ and $v\in V$, we have
			\begin{equation*}
				|a(u,v)|\leq M \|u\|_H \cdot \|v\|_V;
			\end{equation*}	
			
			\item [\rm (2)]
			There exists a constant $\delta >0$ such that for any $v\in V$, we have
			\begin{equation*}
				a(v,v)\geq \delta \|v\|_H^2.
			\end{equation*}
		\end{enumerate}	
		For any bounded linear functional $F: H\rightarrow \mathbb{R}$, there exists a unique $u\in H$ such that for any $v\in V$, we have
		\begin{equation*}
			F(v)=a(u,v).
		\end{equation*}
	\end{lemma}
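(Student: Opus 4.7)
My plan is to reduce Lemma \ref{3.12L} to the abstract range result Lemma \ref{3.11} by representing the bilinear form $a$ through a bounded linear operator. First I would define $A\colon V \to H$ as follows: for each fixed $v\in V$, the map $w\mapsto a(w,v)$ is a bounded linear functional on $H$ by hypothesis (1), so the Riesz representation theorem on $H$ produces a unique $Av\in H$ with $a(w,v) = (w,Av)_H$ for every $w\in H$, and $\|Av\|_H \leq M\|v\|_V$. Linearity in $v$ follows from the bilinearity of $a$, so $A$ is a bounded linear operator from $(V,\|\cdot\|_V)$ to $(H,\|\cdot\|_H)$.

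Next I would exploit the coercivity (2) to produce a bounded inverse of $A$ on its range. Taking $w=v\in V$ in the defining identity gives $a(v,v) = (v,Av)_H$; combining (2) with Cauchy--Schwarz yields $\delta\|v\|_H^2 \leq \|v\|_H\|Av\|_H$, hence $\|Av\|_H \geq \delta\|v\|_H$. Thus $A$ is injective and $A^{-1}\colon R(A)\to V$ satisfies $\|A^{-1}z\|_H \leq \delta^{-1}\|z\|_H$, which is precisely the boundedness hypothesis needed to invoke Lemma \ref{3.11}. Applying it yields $R(A^{*}) = H$ in the sense that for every $h\in H$ there exists $u\in H$ with $(u,Av)_H = (h,v)_H$ for all $v\in V$.

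For existence in Lemma \ref{3.12L}, I would then invoke Riesz representation in $H$ to write the given bounded functional $F$ as $F(v) = (f,v)_H$ for some $f\in H$ and all $v\in H$; applying the previous step with $h=f$ produces $u\in H$ such that $a(u,v) = (u,Av)_H = (f,v)_H = F(v)$ for every $v\in V$, which is the desired identity.

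Uniqueness will be the main obstacle. If $u_1, u_2\in H$ both solve the equation, then $w := u_1-u_2$ satisfies $(w,Av)_H = a(w,v) = 0$ for all $v\in V$, so $w\in R(A)^\perp$. Since the Riesz element in the proof of Lemma \ref{3.11} lies in $\overline{R(A)}$, within $\overline{R(A)}$ uniqueness is automatic, and the remaining question is to show $R(A)^\perp = \{0\}$. I would attempt this via density of $V$ in $H$: choose $v_n\in V$ with $v_n\to w$ in $H$ and use the decomposition $a(v_n,v_n) = a(w,v_n) + a(v_n-w,v_n) = a(v_n-w,v_n)$ together with (2) and (1) to obtain $\delta\|v_n\|_H^2 \leq M\|v_n-w\|_H\|v_n\|_V$. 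The delicate point, which is the true difficulty, is that this forces $w=0$ only if the approximating sequence can be chosen with $\|v_n\|_V$ controlled; completing uniqueness therefore requires either refining the density argument so that $\|v_n\|_V$ stays bounded by a constant times $\|w\|_H$, or exploiting the specific structure of the concrete pair $(H,V)$ in the intended application to \eqref{1.1}.
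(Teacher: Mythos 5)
Your existence argument is essentially identical to the paper's proof: define $A\colon V\to H$ by the Riesz theorem so that $a(w,v)=(w,Av)_H$ for all $w\in H$, use (1) for boundedness of $A$, use (2) together with Cauchy--Schwarz to get $\|Av\|_H\ge \delta\|v\|_H$ and hence a bounded inverse $A^{-1}\colon R(A)\to V$ in the $H$-norm, invoke Lemma \ref{3.11} to obtain $R(A^*)=H$, and finally represent $F$ by a Riesz element $h$ and solve $A^*u=h$. There is no divergence from the paper on this part.

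On uniqueness you have put your finger on a genuine issue, but it is a defect of the statement and of the paper's own proof rather than of your argument: the paper's proof stops after producing $u$ and never addresses uniqueness at all. Moreover, the obstruction you describe cannot be removed in the abstract setting, because hypotheses (1)--(2) alone do not imply uniqueness. For example, take $H=L^2(0,1)$, $V=H_0^1(0,1)$ and $a(u,v)=\int_0^1 u\,(v-v')\,dx$. Then $|a(u,v)|\le \sqrt{2}\,\|u\|_H\|v\|_V$ and $a(v,v)=\|v\|_H^2$ for $v\in V$ (the cross term vanishes since $\int_0^1 vv'\,dx=\tfrac12\bigl[v^2\bigr]_0^1=0$), yet $w(x)=e^{-x}\ne 0$ satisfies $a(w,v)=0$ for all $v\in V$, because $\int_0^1 e^{-x}v'\,dx=\int_0^1 e^{-x}v\,dx$ after integration by parts. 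So $R(A)^{\perp}\ne\{0\}$ can occur, and no refinement of your density argument (controlling $\|v_n\|_V$) can succeed in general; this is the familiar Lions-type situation in which coercivity in the $H$-norm on the smaller space yields existence but not uniqueness. Consequently, uniqueness --- both in Lemma \ref{3.12L} and where it is invoked in the proof of Theorem \ref{3.12} --- must come from the specific structure of the form $a$ and of the spaces $W_0^{1,1}(\Omega;x^\alpha)$, $V_0(\Omega)$: one has to show directly that $a(w,\cdot)\equiv 0$ on $V_0(\Omega)$, equivalently that $w$ is a weak solution of \eqref{1.1} with $f=0$, forces $w=0$. Your closing suggestion to exploit the concrete pair $(H,V)$ is therefore the right (and the only) way to complete the proof; as written, neither your proposal nor the paper establishes the uniqueness assertion.
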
	
	
	\begin{proof}
		From condition (1),  it follows that for any fixed $v\in V$, $a(\cdot,v): H\rightarrow \mathbb{R}$ is a bounded linear functional. Therefore, by the Riesz representation theorem, there exists a unique $Av\in H$ such that
		\begin{equation}\label{3.2.12}
			a(u,v)=(u,Av), \ u\in H.
		\end{equation}
		
		On one hand, due to the bilinearity of $a(u,v)$ and condition (1), it is evident that the defined $A: V\rightarrow H$ is a bounded linear operator. On the other hand, from condition (2) and equation (3.2.12), it can be deduced that for any $v\in V$, we have
		\begin{equation*}	
			(v,Av)_H\geq \delta\|v\|_H^2.		
		\end{equation*}
		This implies that for any $v\in V$, we have
		\begin{equation*}
			\|Av\|_H\geq \delta \|v\|_H,	
		\end{equation*}
		Hence, the inverse operator
		\begin{equation*}
			A^{-1}: R(A)\rightarrow V
		\end{equation*}
		exists, and it is a bounded linear operator in the norm $|\cdot|_H$ sense. From Lemma \ref{3.11}, we know that $R(A^*)=H$. Since $F$ is a bounded linear functional on $H$, by the Riesz representation theorem, for any $v\in H$, there exists a unique $h\in H$ such that
		\begin{equation*}
			F(v)=(h,v)_H.
		\end{equation*}
		Moreover, since $R(A^*)=H$, for any $h\in H$, there exists $u\in H$ such that $A^*u=h$. Therefore, for any $v\in V$, we have
		\begin{equation*}
			(u, Av)_H=(A^*,v)_H=(h,v)_H.
		\end{equation*}
		This, together with \eqref{3.2.12}, we obtain
		\begin{equation*}
			F(v)=a(u,v).
		\end{equation*}
		Thus, the above lemma is proven.
	\end{proof}
	
	Now we are ready to prove Theorem~\ref{3.12}.
	
	\begin{proof}[\textbf{Proof of Theorem~\ref{3.12}}]
		At first, let
		\begin{equation*}
			a(u,v)=\iint_\Omega\left((x^\alpha\partial_yu)(\partial_yv)+\frac{1}{2}(\partial_xu)(\partial_x\partial_yv)\right)e^{-\theta y}dxdy,\ u\in W_{0}^{1,1}(\Omega;x^\alpha), v\in V_0(\Omega),
		\end{equation*}
		where $\theta>0$ is a given constant. Then we have
		\begin{equation}\label{3.2.13}
			|a(u,v)|\leq \|u\|_{W^{1,1}}\cdot \|v\|_{V}
		\end{equation}
		for any $u\in W_{0}^{1,1}(\Omega;x^\alpha)$ and $v\in V(\Omega)$.
		For any $v\in V(\Omega)$, there exists
		\begin{equation*}
			\begin{split}
				\iint_\Omega (\partial_x v)(\partial_x\partial_yv)e^{-\theta y}dxdy
				&=\frac{1}{2}\iint_\Omega e^{-\theta y}\partial_y|\partial_xv|^2dxdy\\
				&=\frac{1}{2}\iint_\Omega \partial_y\left(|\partial_xv|^2e^{-\theta y}\right)dxdy+\frac{\theta}{2}\iint_\Omega |\partial_xv|^2e^{-\theta y}dxdy\\
				&=\frac{e^{-\theta }}{2}\int_0^1|\partial_xv|^2\Big|_{y=1}dx-\frac{1}{2}\int_0^1|\partial_xv|^2\Big|_{y=0}dx \\
				&+\frac{\theta}{2}\iint_\Omega|\partial_xv|^2 e^{-\theta y}dxdy. 
			\end{split}
		\end{equation*}
		By Lemma \ref{3.7}, it is known that
		\begin{equation*}
			\int_0^1|\partial_xv|^2\Big|{y=0}dx=0.
		\end{equation*}
		Hence,
		\begin{equation*}
			\iint_\Omega (\partial_x v)(\partial_x\partial_yv)e^{-\theta y}dxdy\geq \frac{\theta e^{-\theta}}{2}\iint_\Omega |\partial_x v|^2 dxdy.
		\end{equation*}
		
		On the other hand, since $C_{0}^\infty(\Omega)$ is dense in $V_{0}(\Omega)$, the following Poincaré inequality holds:
		\begin{equation*}
			\iint_\Omega v^2 dxdy\leq \mu \iint_\Omega |\partial_x v|^2dxdy.
		\end{equation*}
		Therefore,
		\begin{equation*}
			\iint_\Omega (\partial_x v)(\partial_x\partial_yv)e^{-\theta y}dxdy\geq \frac{\theta e^{-\theta }}{4}\iint_\Omega |\partial_x v|^2dxdy+\frac{\theta e^{-\theta }}{4\mu}\iint_\Omega v^2dxdy.
		\end{equation*}
		From the definition of $a(u,v)$, it follows that for any $v\in V_{0}(\Omega)$,
		\begin{equation}\label{3.2.14}
			|a(v,v)|\geq \delta \|v\|_{W^{1,1}(\Omega;x^\alpha)}^2,
		\end{equation}
		where
		\begin{equation*}
			\delta=\min\left\{e^{-\theta }, \frac{\theta e^{-\theta }}{8}, \frac{\theta e^{-\theta }}{8\mu}\right\}.
		\end{equation*}
		Let
		\begin{equation*}
			H=W_{0}^{1,1}(\Omega;x^\alpha),\quad V=V_{0}(\Omega).
		\end{equation*}
		Then $V$ is dense in $H$, and it can be seen from equations \eqref{3.2.13} and \eqref{3.2.14} that the conditions (1) and (2) in Lemma \ref{3.12L} are satisfied. It is evident that
		\begin{equation*}
			\begin{split}
				\iint_\Omega f\partial_yv e^{-\theta y}dxdy
				&\leq\left(\iint_\Omega\left|f\right|^2 x^{-\alpha}
				dxdy \right)^{\frac{1}{2}}\left(\iint_\Omega \left|\partial_yv \right|^2 x^{\alpha} dxdy\right)^{\frac{1}{2}}\\
				&=\left||f|\right|_{L^2(\Omega,x^{-\alpha})}\left||v|\right|_{W^{1,1}(\Omega,x^{\alpha})}.
			\end{split}
		\end{equation*}	
		That is, $\iint_\Omega f\partial_yv e^{-\theta y}dxdy$ is a bounded linear functional on $H$ with respect to $v$. Therefore, there exists a unique $u\in H$ such that
		\begin{equation}\label{36}
			a(u,v)=\iint_\Omega f\partial_yv e^{-\theta y}dxdy, \ \forall v\in V_{0}(\Omega).
		\end{equation}
		In other words, for any $v\in V_{0}(\Omega)$, we have
		\begin{equation}\label{proof3.7}
			\iint_\Omega\left((x^\alpha\partial_yu)(\partial_yv)+\frac{1}{2}(\partial_xu)(\partial_x\partial_yv)\right)e^{-\theta y}dxdy=\iint_\Omega f\partial_yv e^{-\theta y}dxdy.
		\end{equation}
		This along with Lemma \ref{3.10}, shows that $u$ is a weak solution of the equation \eqref{1.1}.
		
		Next, by \eqref{36}, we have
		\begin{align*}
			a(u,u)&=\iint_\Omega f\partial_yu e^{-\theta y}dxdy=\iint_\Omega x^{-\frac{\alpha}{2}}f x^{\frac{\alpha}{2}}\partial_yu e^{-\theta y}dxdy\\
			&\leq \left( \iint_\Omega x^{-\alpha}f^2 dxdy\right)^\frac{1}{2}\left( \iint_\Omega x^{\alpha}(\partial_yu)^2 dxdy\right)^\frac{1}{2}\\
			&=\left||f|\right|_{L^2(\Omega,x^{-\alpha})}\left||u|\right|_{W^{1,1}(\Omega,x^{\alpha})}.
		\end{align*}
		This, along with \eqref{3.2.14} replacing $v$ by $u$, it implies that the desired inequality \eqref{ine44} holds. The proof is completed.
	\end{proof}

	\section{The existence of a Stackelberg-Nash equilibrium}
	In this section, we study the Stackelberg-Nash game problem {\bf (P)}. At first, we need some auxiliary results.
	
	To facilitate our discussion, we will employ fundamental definitions and concepts. For a comprehensive understanding of these concepts, we refer the
	readers to references such as section 3 in \cite{sun2022fundamental} or \cite{heinonen2018nonlinear,muckenhoupt1972weighted,mamedov2021poincare}.
	
	Let $v(x)$ and $w(x)$ be non-negative and local integrable with respect to Lebesgue measure and a measurable set $E\subset\R^2$. 
	
	Denote
	$$
	B:=B(x,t):=\{y\in\R^2:|y-x|<t\},
	$$
	with $x\in\Omega$, $0<t\leq5~\text{diam}(\Omega)$.  
	\begin{definition}\label{A}
		We say that
		the function $v(x)$ belongs to the Muckenhoupt class $A_\infty$  if there exist constants $C$
		and $\delta> 0$ such that for any measurable set $E \subset B$,
		\begin{equation}\label{22}
			\frac{v(E)}{v(B)}\leq C\left( \frac{|E|}{|B|} \right)^\delta,    
		\end{equation}
		where $v(\cdot)=\int_\cdot v(x)dx$.
	\end{definition}
	\begin{definition}
		We say that
		the function $w(x)$ belongs to the Muckenhoupt class $A_p,1\leq p<\infty$  if there exist constants $C_p$ such that \begin{equation*}
			\begin{split}
				\begin{cases}
					\left(|B\cap\Omega|^{-1}\int_{B\cap\Omega} w(x)dx\right) \left(|B\cap\Omega|^{-1}\int_{B\cap\Omega} w^{-\frac{1}{p-1}}(x)dx\right)^{p-1}\leq C_p<\infty, &~\text{if}~1<p<\infty,\\
					\left(|B\cap\Omega|^{-1}\int_{B\cap\Omega} w(x)dx\right) \leq C_1\essinf_{x\in B\cap\Omega}w(x), &~\text{if}~p=1.
				\end{cases}
			\end{split}	
		\end{equation*}
	\end{definition}
	
	The following result, which can be found in section 3 in \cite{sun2022fundamental}
	\begin{lemma}\label{cite}
		Let $1\leq p\leq q<\infty$. Suppose that $v\in A_\infty$, $w_j(x)\in A_p, j=1,2$.  If the condition    \begin{equation}
			\label{222}
			|B|^{-1} {\rm diam} (B) v(B\cap\Omega)^{\frac{1}{q}} \left[ w_j^{-\frac{1}{p-1}}(B\cap\Omega)\right]^{\frac{p-1}{p}} \leq A_{pq}<\infty,
		\end{equation}
		is fulfilled for any Euclidean ball $B(x, t)$ having a center $x$, $0<t\leq\text{diam}(\Omega)$. Then there exists
		a positive number $C_0$ depending on $q$ and the constants $C$ and $\delta$ occurring in \eqref{22},  such that for
		any $u\in {\rm Lip}_0(\Omega)$, we have
		\begin{equation}\label{2222}
			\left(\int_{\Omega} |u|^qvdx\right)^{\frac{1}{q}}\leq C_0A_{pq}\sum_{j=1}^2\left(\int_{\Omega}|\partial_{x_j}  u|^pw_jdx\right)^{\frac{1}{p}},
		\end{equation}
		where ${\rm Lip}_0(\Omega)$ is the class of Lipschitz continuous functions which have compact support in $\Omega$.
	\end{lemma}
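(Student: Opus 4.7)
The plan is to reduce this weighted Sobolev--Poincar\'e inequality to a two-weight norm inequality for the Riesz potential $I_1 g(x):=\int_{\R^2} g(y)|x-y|^{-1}\, dy$. Extending $u\in\mathrm{Lip}_0(\Omega)$ by zero to $\R^2$ and invoking the classical representation formula
\begin{equation*}
u(x)=c\int_{\R^2}\frac{(x-y)\cdot\nabla u(y)}{|x-y|^2}\, dy,
\end{equation*}
one obtains the pointwise bound $|u(x)|\le c\sum_{j=1}^2 I_1(|\partial_{x_j} u|\chi_\Omega)(x)$. Taking the $L^q(\Omega,v\,dx)$ norm of both sides and using the triangle inequality reduces the lemma to proving, for each $j=1,2$ and every nonnegative $g$ supported in $\Omega$, the two-weight estimate
\begin{equation*}
\|I_1 g\|_{L^q(\Omega,v)}\le C\, A_{pq}\, \|g\|_{L^p(\Omega,w_j)}.
\end{equation*}

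To prove this estimate I would perform a dyadic decomposition of $I_1g$ around each point $x$, namely
\begin{equation*}
I_1 g(x)\le C\sum_{k}2^{-k}\int_{B(x,2^{-k})\cap\Omega}g(y)\, dy,
\end{equation*}
and apply H\"older's inequality on each ball $B=B(x,2^{-k})$ with respect to the conjugate weights $w_j$ and $w_j^{-1/(p-1)}$. This produces precisely the factor $\bigl[w_j^{-1/(p-1)}(B\cap\Omega)\bigr]^{(p-1)/p}$ that appears in hypothesis \eqref{222}; combined with the geometric factor $2^{-k}\sim \mathrm{diam}(B)$, the testing condition \eqref{222} rearranges into
\begin{equation*}
2^{-k}\bigl[w_j^{-1/(p-1)}(B\cap\Omega)\bigr]^{(p-1)/p}\le A_{pq}\,|B|\, v(B\cap\Omega)^{-1/q},
\end{equation*}
so each dyadic contribution is controlled by $A_{pq}\,v(B\cap\Omega)^{-1/q}$ times a local $L^p(w_j)$-norm of $g$. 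Raising the resulting bound to the $q$-th power, integrating against $v$, and using the $A_\infty$ property \eqref{22} of $v$ to compare $v$-measures of arbitrary measurable sets with $v$-measures of enclosing balls yields the decay needed to sum a geometric series in $k$ and to exchange the order of integration.

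The principal obstacle is precisely this final summation step: converting the purely geometric testing condition \eqref{222} into a genuine operator-norm inequality for $I_1$ requires a Sawyer--Wheeden-type covering/stopping-time argument. Both weight hypotheses do essential work --- the $A_\infty$ reverse-H\"older phenomenon for $v$ underpins the comparison of set-measures with ball-measures that makes the dyadic tails geometrically summable, and the $A_p$-duality for each $w_j$ underpins the H\"older step in each dyadic band. Once the two-weight inequality for $I_1$ is established, summing over $j=1,2$ and combining with the pointwise representation from the first step delivers \eqref{2222} with the claimed constant $C_0 A_{pq}$.
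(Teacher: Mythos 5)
This lemma is not proved in the paper at all: it is quoted verbatim from Section~3 of \cite{sun2022fundamental}, so the paper's ``proof'' is a citation and any argument you give is necessarily a different route. Your sketch follows the standard potential-theoretic path (representation of $u$ by a Riesz potential, local H\"older with the dual weight $w_j^{-1/(p-1)}$, dyadic decomposition in scales, $A_\infty$ for $v$ to sum the scales), which is indeed the family of ideas behind results of this Chanillo--Wheeden/Sawyer--Wheeden type.

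However, as a proof it has a genuine gap, and you name it yourself: the passage from the purely geometric ball condition \eqref{222} to the strong-type two-weight bound $\|I_1 g\|_{L^q(\Omega,v)}\le C A_{pq}\|g\|_{L^p(\Omega,w_j)}$ is only announced as ``a Sawyer--Wheeden-type covering/stopping-time argument'' and never carried out, yet this step \emph{is} the content of the lemma. It is not a routine verification: for $p>1$ a ball-wise condition of the form \eqref{222} is in general insufficient for a two-weight strong-type inequality for a fractional integral (Sawyer's theory requires testing conditions on the operator itself), and sufficiency here depends quantitatively on the additional hypotheses $w_j\in A_p$ and $v\in A_\infty$, used not merely as you describe but inside the covering/good-$\lambda$ or stopping-time machinery. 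Relatedly, your proposed conclusion of the argument --- ``raising the resulting bound to the $q$-th power, integrating against $v$, and summing the geometric series'' --- does not go through as stated, because the dyadic sum sits inside the $L^q(v)$ norm with $q\ge p$, so the naive interchange of the sum in $k$ with the outer integration loses the needed decay; handling this is exactly where the omitted stopping-time construction does its work. For the purposes of this paper the honest fix is the same one the authors use: invoke the result of \cite{sun2022fundamental} (or Chanillo--Wheeden) rather than reprove it; if you do want a self-contained proof, the missing covering argument must be written out in full.
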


	\begin{lemma}\label{neicha4.2}
		Let  $u\in W_{0}^{1,1}(\Omega;x^\alpha)$ , and  $2\le q \le 4$, one has	
		\begin{equation}\label{4.2.24}
			\|u\|_{L^q(\Omega)}\le C\|u\|_{W_{0}^{1,1}(\Omega;x^\alpha)}.	
		\end{equation}	
	\end{lemma}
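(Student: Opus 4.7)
The plan is to apply the weighted Sobolev embedding of Lemma~\ref{cite} at the endpoint $q=4$ and interpolate. Since $\|u\|_{L^2(\Omega)}\leq\|u\|_{W_0^{1,1}(\Omega;x^\alpha)}$ holds trivially from the definition of the norm, once the endpoint estimate $\|u\|_{L^4(\Omega)}\leq C\|u\|_{W_0^{1,1}(\Omega;x^\alpha)}$ is in hand, the standard H\"older interpolation $\|u\|_{L^q}\leq\|u\|_{L^2}^{(4-q)/q}\|u\|_{L^4}^{(2q-4)/q}$ for $q\in[2,4]$ yields \eqref{4.2.24} throughout the whole range.

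To obtain the endpoint, I would invoke Lemma~\ref{cite} with the data $p=2$, $q=4$, $v\equiv 1$, $w_1\equiv 1$ and $w_2(x,y)=x^\alpha$. The Muckenhoupt requirements $v\in A_\infty$ and $w_1\in A_2$ are immediate, and for $\alpha\in(0,1)$ the assertion $w_2=x^\alpha\in A_2$ is the classical power-weight criterion. The key technical step is the verification of hypothesis \eqref{222}: for a ball $B=B((x_0,y_0),t)$ with $0<t\leq 5\,\mathrm{diam}(\Omega)$, writing $|B|\asymp t^2$ and $\mathrm{diam}(B)\asymp t$, the quantity
\begin{equation*}
|B|^{-1}\mathrm{diam}(B)\,|B\cap\Omega|^{1/4}\Bigl(\int_{B\cap\Omega}x^{-\alpha}\,dxdy\Bigr)^{1/2}
\end{equation*}
must be bounded uniformly in $B$. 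A two-case analysis suffices: when $x_0\geq 2t$, the bound $x^{-\alpha}\lesssim x_0^{-\alpha}\lesssim t^{-\alpha}$ on $B\cap\Omega$ gives the bound $Ct^{(1-\alpha)/2}$; when $x_0<2t$, the explicit computation $\int_{B\cap\Omega}x^{-\alpha}dxdy\lesssim t\int_0^{3t}x^{-\alpha}dx\lesssim t^{2-\alpha}$ again yields $Ct^{(1-\alpha)/2}$. Both expressions remain uniformly finite on the relevant range of $t$.

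Lemma~\ref{cite} then delivers, for every $u\in\mathrm{Lip}_0(\Omega)$,
\begin{equation*}
\|u\|_{L^4(\Omega)}\leq C\bigl(\|\partial_x u\|_{L^2(\Omega)}+\|x^{\alpha/2}\partial_y u\|_{L^2(\Omega)}\bigr)\leq C\|u\|_{W_0^{1,1}(\Omega;x^\alpha)},
\end{equation*}
which extends to every $u\in W_0^{1,1}(\Omega;x^\alpha)$ by density of $C_0^\infty(\Omega)\subset\mathrm{Lip}_0(\Omega)$. Combined with interpolation against the trivial $L^2$ bound, this gives \eqref{4.2.24} for all $q\in[2,4]$.

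The hard part will be the borderline case $\alpha=1$: there $x^{-1}$ fails to be locally integrable near the degenerate axis $x=0$, the integral appearing in \eqref{222} becomes infinite on balls touching that axis, and the weight $x$ itself fails to lie in $A_2(\mathbb{R}^2)$. For $\alpha=1$ one therefore cannot apply Lemma~\ref{cite} in the simple-minded way above. I expect either a refinement exploiting the localisation of \eqref{222} to $B\cap\Omega$ rather than the full ball, or a direct Gagliardo--Nirenberg argument combining the one-dimensional Sobolev embedding $H_0^1(0,1)\hookrightarrow L^\infty(0,1)$ applied in the $x$-slices with a weighted control of $\sup_y\|u(\cdot,y)\|_{L^2_x}$ extracted from $\|x^{1/2}\partial_y u\|_{L^2}$ via a Hardy-type inequality, to be the way to close that remaining gap.
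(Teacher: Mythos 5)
Your argument is essentially the paper's: the paper also proves \eqref{4.2.24} by feeding $v\equiv 1$, $w_1\equiv 1$, $w_2=x^{\alpha}$, $p=2$ into Lemma~\ref{cite}, after first checking that ${\rm Lip}_0(\Omega)$-approximation is compatible with $W_0^{1,1}(\Omega;x^\alpha)$ (its Step~1 shows the closures of ${\rm Lip}_0(\Omega)$ and $C_0^\infty(\Omega)$ in the weighted norm coincide, which plays the role of your density remark). The only structural difference is minor: the paper verifies \eqref{222} and invokes Lemma~\ref{cite} for every $q\in[2,4]$ at once and then merges the two gradient terms by a Jensen-type inequality, whereas you treat only the endpoint $q=4$ and recover the range $2\le q\le 4$ by H\"older interpolation against the trivial $L^2$ bound; both routes are fine, and your two-case verification of \eqref{222} (center far from, respectively close to, the degenerate line) is exactly the computation the paper leaves implicit.

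The one substantive point is the case $\alpha=1$, which you flag as unresolved. You are right that this verification breaks there: $x^{-1}$ fails to be integrable on $B\cap\Omega$ for balls meeting the segment $\{x=0\}$, so the quantity in \eqref{222} is infinite (your constant degenerates like $(1-\alpha)^{-1/2}$ as $\alpha\to 1$), and $x^{1}$ is not an $A_2$ weight. But this is not an idea you failed to extract from the paper: the paper's Step~2 simply asserts $\bigl(\iint_{B\cap\Omega}w_2(x,y)^{-1}\,dxdy\bigr)^{\alpha}\le C$ without justification, and that bound is only valid for $\alpha<1$, so the paper's own proof has the same defect at the endpoint $\alpha=1$ that you identify; a separate argument of the kind you sketch (slice-wise one-dimensional embedding in $x$ combined with control of $\sup_y\|u(\cdot,y)\|_{L^2_x}$ through the weighted $y$-derivative) is indeed what would be needed to cover it. Relative to the paper, then, your proposal is the same proof for $\alpha\in(0,1)$, carried out more carefully, with the genuine endpoint difficulty made explicit rather than elided.
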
	
	\begin{proof}
		We divide the proof into two steps.
		
		{\it Step 1.}
		By the definition of $W_{0}^{1,1}(\Omega;x^\alpha)$, we have $$\overline{C_0^\infty(\Omega)}^{W^{1,1}(\Omega;x^\alpha)} \subset \overline{{\rm Lip}_0(\Omega)}^{W^{1,1}(\Omega;x^\alpha)}. $$ 
		We just need to prove 
		$$
		\overline{{\rm Lip}_0(\Omega)}^{W^{1,1}(\Omega;x^\alpha)} \subset \overline{C_0^\infty(\Omega)}^{W^{1,1}(\Omega;x^\alpha)}.
		$$
		
		To this end, for all $u\in\overline{{\rm Lip}_0(\Omega)}^{W^{1,1}(\Omega;x^\alpha)}$, we can choose the sequence $\{u_n\}_{n\in \mathbb{N}} \in {\rm Lip}_0(\Omega)$ such that $$\|u_n-u\|_{W^{1,1}(\Omega;x^\alpha)}\rightarrow 0 ~\text{as}~n \rightarrow \infty, ~\text{and ~} |Du_n|\leq C  ~\text{a.e. in}~ \Omega.$$ Thus $u_n\in H_0^1(\Omega)$, and there exists $\{v_n\}_{n\in\mathbb{N}}\subset C_0^\infty(\Omega)$ so that
		\begin{equation*}
			\|v_n-u_n\|_{H_0^1(\Omega)}\rightarrow 0, ~\text{as}~n\rightarrow \infty,
		\end{equation*}
		and
		\begin{equation*}
			\|v_n-u_n\|_{W^{1,1}(\Omega;x^\alpha)}\leq C\|v_n-u_n\|_{H^1(\Omega)}\rightarrow 0, ~\text{as}~ n\rightarrow \infty.
		\end{equation*}
		From these, we have
		\begin{equation*}
			\|v_n-u\|_{W^{1,1}(\Omega;x^\alpha)}\leq \|v_n-u_n\|_{W^{1,1}(\Omega;x^\alpha)}+\|u_n-u\|_{W^{1,1}(\Omega;x^\alpha)}
			\rightarrow 0 ~\text{as}~ n\rightarrow \infty.
		\end{equation*}
		
		It is easy to deduce that $W_{0}^{1,1}(\Omega;x^\alpha)=\overline{{\rm Lip}_0(\Omega)}^{W^{1,1}(\Omega;x^\alpha)}$, and ${\rm Lip}_0(\Omega)$ is dense in  $W_{0}^{1,1}(\Omega;x^\alpha)$.
		
		{\it Step 2.}
		One can verify that $v(x,y)=1$ satisfying the definition of $ A_\infty(\Omega)$ (see Definition \ref{A}). Then we select $w_1=1$, $w_2=x^{\alpha}$, and $p=2$ in \eqref{222} of Lemma \ref{cite}. If $p=2$, $2\le q \le 4$, one can obtain
		\begin{equation*}
			\begin{split}
				|B|^{-1} {\rm diam} (B)  v(B\cap\Omega)^{\frac{1}{q}} &\left({\iint_{B\cap \Omega} w_1(x,y)^{-1}dxdy}\right)^{\alpha} \leq Cr^\frac{2}{q}	\le C,\\
				|B|^{-1} {\rm diam} (B)  v(B\cap\Omega)^{\frac{1}{q}} &\left({\iint_{B\cap \Omega} w_2(x,y)^{-1}dxdy}\right)^{\alpha}\le C.
			\end{split}	
		\end{equation*}
		Further, from Lemma \ref{cite}, we know that for arbitrary $u \in W_{0}^{1,1}(\Omega;x^\alpha)$,
		\begin{equation*}
			\left(\iint_{\Omega} |u|^qdxdy\right)^{\frac{1}{q}}\le C \left[\left(\iint_{\Omega}|\partial_x  u|^2dxdy\right)^{\alpha}+\left(\iint_{\Omega}|\partial_{y}u|^2 x^{\alpha}dxdy\right)^{\alpha}\right]. 	
		\end{equation*}
		By \ {\rm Jensen} inequality $\frac{x^t+y^t}{2}\le  \left(\frac{x+y}{2} \right)^t$,  $t\in (0,1)$, $x,y \in \mathbb{R^+}$, we have
		\begin{equation*}
			\begin{aligned}
				&\left(\iint_{\Omega}|\partial_x u|^2dxdy\right)^{\alpha}+\left(\iint_{\Omega}  |\partial_y u|^2 x^{\alpha} dxdy\right)^{\alpha}\\
				\le &C\left(\iint_{\Omega}\left(|\partial_x u|^2+|\partial_y u|^2 x^{\alpha}\right)dxdy\right)^{\alpha}.
			\end{aligned}
		\end{equation*}
		The proof is completed.
	\end{proof}

	\begin{lemma}\label{R4.3}
		For all  $u\in W_{0}^{1,1}(\Omega;x^\alpha)$, let
		\begin{equation*}
			\tilde u(x,y)=
			\begin{cases}
				u(z), & z\in \Omega,\\
				0, & z\in \Omega^c.
			\end{cases}
		\end{equation*}
		Then $\tilde{u}\in W_{0}^{1,1}(\mathbb{R}^2;x^\alpha)$.	
		
	\end{lemma}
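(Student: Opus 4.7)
The plan is to produce the zero extension $\tilde{u}$ as the limit, in the weighted Sobolev norm on $\mathbb{R}^2$, of the zero extensions of an approximating sequence of $C_0^\infty(\Omega)$ functions for $u$. Since $W_0^{1,1}(\mathbb{R}^2;x^\alpha)$ is (presumably) defined as the closure of $C_0^\infty(\mathbb{R}^2)$ in the natural analogue of the $W^{1,1}(\Omega;x^\alpha)$-norm, exhibiting such an approximating sequence is exactly what is required.

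First, by the definition of $W_0^{1,1}(\Omega;x^\alpha)$, pick a sequence $\{u_n\}_{n\in\mathbb{N}} \subset C_0^\infty(\Omega)$ with $\|u_n - u\|_{W^{1,1}(\Omega;x^\alpha)} \to 0$. Define $\tilde{u}_n$ to be the zero extension of $u_n$ to $\mathbb{R}^2$; because each $u_n$ has compact support strictly inside the open set $\Omega$, we have $\tilde{u}_n \in C_0^\infty(\mathbb{R}^2)$, and moreover the distributional derivatives of $\tilde{u}_n$ agree with the zero extensions of $\partial_x u_n$ and $\partial_y u_n$ respectively (a standard calculation via integration by parts against $C_0^\infty(\mathbb{R}^2)$ test functions, using that $u_n$ vanishes in a neighborhood of $\partial\Omega$).

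Next, since $\tilde{u}_n - \tilde{u}_m$ and all its derivatives vanish outside $\Omega$, the relevant $L^2(\mathbb{R}^2)$ and weighted norms reduce to integrals over $\Omega$:
\begin{equation*}
\|\tilde{u}_n - \tilde{u}_m\|_{W^{1,1}(\mathbb{R}^2;x^\alpha)} = \|u_n - u_m\|_{W^{1,1}(\Omega;x^\alpha)} \longrightarrow 0 \quad \text{as } n,m \to \infty.
\end{equation*}
Hence $\{\tilde{u}_n\}$ is Cauchy in $W^{1,1}(\mathbb{R}^2;x^\alpha)$, and by completeness (the $\mathbb{R}^2$ analogue of Proposition \ref{3.2}, whose proof is identical) it converges strongly to some $w \in W_0^{1,1}(\mathbb{R}^2;x^\alpha)$.

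Finally, identify $w$ with $\tilde{u}$. On $\Omega$ the restrictions $\tilde{u}_n|_\Omega = u_n$ converge in $L^2(\Omega)$ both to $u = \tilde{u}|_\Omega$ and to $w|_\Omega$, so $w = \tilde{u}$ a.e.\ on $\Omega$. On $\Omega^c$, each $\tilde{u}_n$ is identically zero, so $L^2$-convergence forces $w = 0 = \tilde{u}$ a.e.\ on $\Omega^c$. Thus $\tilde{u} = w \in W_0^{1,1}(\mathbb{R}^2;x^\alpha)$. The only mildly delicate point I anticipate is verifying that the distributional derivatives of $\tilde{u}_n$ really are the zero extensions of the classical derivatives, i.e.\ that no boundary contribution appears when we extend across $\partial\Omega$; this is immediate for $u_n \in C_0^\infty(\Omega)$, which is the whole reason we pass to an approximating sequence rather than trying to extend $u$ directly.
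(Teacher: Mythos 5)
Your proposal is correct. It rests on the same basic mechanism as the paper's proof (the $C_0^\infty(\Omega)$ approximating sequence makes the extension across $\partial\Omega$ harmless), but the route is different. The paper never extends the approximating sequence: it computes the distributional derivatives of $\tilde u$ on $\mathbb{R}^2$ directly, testing against $\varphi$ and passing the integration by parts through the $u_n$ inside the integral over $\Omega$, which yields the explicit identifications $\partial_x\tilde u=(\partial_x u)^{\sim}$ and $x^{\frac{\alpha}{2}}\partial_y\tilde u=\bigl[x^{\frac{\alpha}{2}}\partial_y u\bigr]^{\sim}$ together with the norm equality $\|\tilde u\|_{W^{1,1}(\mathbb{R}^2;x^{\alpha})}=\|u\|_{W^{1,1}(\Omega;x^{\alpha})}$; the membership in the closed subspace $W_0^{1,1}(\mathbb{R}^2;x^\alpha)$, as opposed to mere $W^{1,1}$ regularity, is left implicit there. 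Your argument instead extends the $u_n$ themselves, observes that the extended sequence is Cauchy in the $\mathbb{R}^2$-norm (the norms of differences reduce to integrals over $\Omega$), invokes completeness, and identifies the limit with $\tilde u$ a.e.; this buys you the closure property $\tilde u\in\overline{C_0^\infty(\mathbb{R}^2)}$ explicitly, at the cost of not producing the formulas for the extended derivatives or the norm identity unless you add a final line noting that norms pass to the limit. One cosmetic caveat shared with the paper: the weight $x^\alpha$ is not defined for $x<0$, so the ``$\mathbb{R}^2$ analogue of Proposition \ref{3.2}'' should be read with $|x|^\alpha$ (or restricted to the half-plane); since all functions involved are supported in $\overline{\Omega}$, this does not affect the argument.
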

	\begin{proof}
		For all  $u\in W_{0}^{1,1}(\Omega;x^\alpha)$, there exists $\{u_n\}_{n\in\mathbb{N}}\subset C_0^\infty(\Omega)$, such that 
		\begin{equation*}
			u_n\rightarrow u \text{ strongly in }  W^{1,1}(\Omega;x^\alpha).
		\end{equation*} 	
		Then for all $\varphi\in C_0^\infty(\Omega)$, we have 
		\begin{equation*}
			\begin{split}
				\iint_{\mathbb{R}^2} (x^{\frac{\alpha}{2}}\partial_y\tilde u)\varphi dxdy
				&=-\iint_{\mathbb{R}^2}\tilde u(x^{\frac{\alpha}{2}}\partial_y\varphi)dxdy=-\iint_\Omega u(x^{\frac{\alpha}{2}}\partial_y\varphi)dxdy\\
				&=-\lim_{n\rightarrow\infty}\iint_\Omega u_n(x^{\frac{\alpha}{2}}\partial_y\varphi)dxdy=\lim_{n\rightarrow\infty}\iint_\Omega (x^{\frac{\alpha}{2}}\partial_y u_n)\varphi dxdy\\
				&=\iint_\Omega (x^{\frac{\alpha}{2}}\partial_y u)\varphi dxdy=\iint_{\mathbb{R}^2}{\left[x^{\frac{\alpha}{2}}\partial_y u\right]}^\sim\varphi dxdy
			\end{split}
		\end{equation*}
		in the sense of distribution. Thus we have
		\begin{equation*}
			x^{\frac{\alpha}{2}}\partial_y\tilde u={\left[x^{\frac{\alpha}{2}}\partial_y u\right]}^\sim ,
		\end{equation*}
		where
		\begin{equation*}
			{\left[x^{\frac{\alpha}{2}}\partial_y u\right]}^\sim(z)=
			\begin{cases}
				x^{\frac{\alpha}{2}}\partial_y u(z), & z\in \Omega,\\
				0, & z\in \Omega^c.
			\end{cases}
		\end{equation*}
		In a similar way,
		\begin{equation*}
			\partial_{x}\tilde u=(\partial_{x}u)^\sim.
		\end{equation*}	
		Hence $\tilde u\in W^{1,1}_0(\mathbb{R}^2,x^{\alpha})$ and
		\begin{equation*}
			\|\tilde u\|_{W^{1,1}_0(\mathbb{R}^2,x^{\alpha})}=\|u\|_{W^{1,1}_0(\Omega,x^{\alpha})}.
		\end{equation*}	
		
	\end{proof}	
	
	\begin{proposition}
		\label{compactembed}
		The embedding $W_{0}^{1,1}(\Omega;x^\alpha) \hookrightarrow L^2(\Omega)$ is compact.
	\end{proposition}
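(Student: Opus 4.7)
The strategy is to combine the classical Rellich--Kondrachov theorem on a non-degenerate subregion with a uniform tightness estimate on the thin degenerate strip near $\{x=0\}$, using the improved integrability from Lemma \ref{neicha4.2} to control the tail.

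Let $\{u_n\}_{n\in\mathbb{N}}$ be a bounded sequence in $W_{0}^{1,1}(\Omega;x^\alpha)$, say $\|u_n\|_{W^{1,1}} \leq M$. For each $\epsilon\in(0,1)$, I would decompose $\Omega = \Omega_\epsilon \cup S_\epsilon$, where $\Omega_\epsilon := (\epsilon,1)\times(0,1)$ and $S_\epsilon := (0,\epsilon)\times(0,1)$. On the non-degenerate piece $\Omega_\epsilon$, the weight satisfies $x^{\alpha} \geq \epsilon^{\alpha}$, so
\begin{equation*}
\|\partial_y u_n\|_{L^2(\Omega_\epsilon)} \leq \epsilon^{-\alpha/2}\|x^{\alpha/2}\partial_y u_n\|_{L^2(\Omega)} \leq \epsilon^{-\alpha/2} M.
\end{equation*}
Hence $\{u_n\}$ is bounded in $H^1(\Omega_\epsilon)$, and the classical Rellich--Kondrachov theorem yields a subsequence that converges strongly in $L^2(\Omega_\epsilon)$.

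The key step for the degenerate tail is to apply Lemma \ref{neicha4.2} with $q=4$, which gives $\|u_n\|_{L^4(\Omega)} \leq C M$ uniformly in $n$. Then by H\"older's inequality on the strip $S_\epsilon$ (whose Lebesgue measure is $\epsilon$),
\begin{equation*}
\|u_n\|_{L^2(S_\epsilon)} \leq \|u_n\|_{L^4(S_\epsilon)} \, |S_\epsilon|^{1/4} \leq CM\,\epsilon^{1/4}.
\end{equation*}
This uniform smallness of the $L^2$ mass near $x=0$ is exactly the tightness needed.

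To finish, I would run a standard diagonal extraction: for $\epsilon_k = 1/k$, successively extract subsequences converging in $L^2(\Omega_{\epsilon_k})$ and pass to the diagonal. For any $\delta>0$, choose $k$ so that $CM\epsilon_k^{1/4} < \delta/4$; then convergence on $\Omega_{\epsilon_k}$ together with the tail estimate on $S_{\epsilon_k}$ shows the diagonal subsequence is Cauchy in $L^2(\Omega)$. The main obstacle is precisely the lack of direct control on $\partial_y u_n$ near $x=0$; this is circumvented by the weighted Sobolev inequality of Lemma \ref{neicha4.2}, which upgrades boundedness in $W_0^{1,1}(\Omega;x^\alpha)$ to boundedness in $L^4(\Omega)$ and thereby produces the required uniform tail control.
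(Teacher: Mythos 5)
Your proof is correct, but it follows a genuinely different route from the paper. You split $\Omega$ into the non-degenerate piece $\Omega_\epsilon=(\epsilon,1)\times(0,1)$, where the weighted norm controls the full $H^1(\Omega_\epsilon)$ norm (with constant $\epsilon^{-\alpha/2}$) so that the classical Rellich--Kondrachov theorem applies, and the thin strip $S_\epsilon=(0,\epsilon)\times(0,1)$, where the uniform $L^4$ bound of Lemma \ref{neicha4.2} plus H\"older gives the tightness $\|u_n\|_{L^2(S_\epsilon)}\le CM\epsilon^{1/4}$; a diagonal extraction then yields an $L^2(\Omega)$-Cauchy subsequence. The paper instead extends by zero to $\mathbb{R}^2$ (Lemma \ref{R4.3}), verifies the Fr\'echet--Kolmogorov criterion (uniform translation estimates together with smallness of $\int|\hat u_n|$ on strips near all four edges of $\partial\Omega$) to get precompactness in $L^1(\Omega)$, and only then upgrades to $L^2$ by interpolating between $L^1$ and $L^q$, $q\in(2,4]$, again via Lemma \ref{neicha4.2}. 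So both arguments hinge on the weighted Sobolev inequality, but for different tasks: you use it for tail control near the degeneracy line $x=0$, the paper uses it for the $L^1$-to-$L^2$ interpolation step. Your version is more elementary and local: it needs neither the extension lemma nor the $L^1$ compactness criterion, it works directly in $L^2$, and it isolates the degeneracy cleanly at $x=0$; in particular your strip estimate does not involve the quantity $\|x^{-\alpha}\|_{L^1(0,1)}$ that enters the paper's translation and boundary-strip bounds, which is a delicate factor at the endpoint $\alpha=1$ (indeed you could even replace the $L^4$ argument there by the one-dimensional Poincar\'e bound $|u(x,y)|^2\le x\int_0^1|\partial_x u(s,y)|^2\,ds$, using that elements of $W_0^{1,1}(\Omega;x^\alpha)$ vanish at $x=0$). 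The paper's approach, on the other hand, treats all boundary effects simultaneously within the Kolmogorov compactness framework and yields $L^1$-precompactness as an intermediate product.
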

	
	\begin{proof}
		We divide the proof into following steps.
		
		{\it Step 1.} Let $\{u_n\}_{n\in\mathbb{N}}\subset W_{0}^{1,1}(\Omega;x^\alpha)$ is a bounded sequence. We shall show that the sequence $\{\tilde u_n\}_{n\in\mathbb{N}}$ is precompact in  $L^1(\Omega)$.
		
		Let
		\begin{equation*}
			\tilde u_n(x,{y})=
			\begin{cases}
				u_n(z), & z\in \Omega,\\
				0, & z\in \Omega^c.
			\end{cases}
		\end{equation*}
		By Lemma {\rm\ref{R4.3}}, we have $\tilde{u}\in W_{0}^{1,1}(\mathbb{R}^2)$. Next   According to Theorem {\rm 2.32} in
		{\rm \cite{adams2003sobolev}}, it suffices to prove that for all $\epsilon>0$, there exists $\delta>0$ and  $G\subset\subset \Omega$, such that
		\begin{equation}\label{4.2.25}
			\int_\Omega|\tilde u_n(z+\xi)-\tilde u_n(z)|dz<\epsilon,\quad \int_{\Omega-\bar G}|\tilde u_n(z)|dz<\epsilon,
		\end{equation}
		where $\xi=(\xi_1,\xi_2)\in\mathbb{R}^2$ with $|\xi|<\delta$. Let $\epsilon\in (0,1)$. by the definition of $W_{0}^{1,1}(\Omega;x^\alpha)$,  for any $n\in\mathbb{N}$, there exists $\hat u_n\in C_0^\infty(\Omega)$ such that
		\begin{equation}\label{4.2.26}
			\|u_n-\hat u_n\|_{W^{1,1}}<\epsilon.
		\end{equation}
		Still denoted by $\widetilde{(\hat u_n)}=\hat u_n$ for each $\hat u_n\in C_0^\infty(\Omega)$, Obviously, $\{\hat u_n\}_{n\in\mathbb{N}}\subset C_0^\infty(\Omega)$  is bounded in $W_0^{1,1}(\mathbb{R}^2;x^\alpha)$, and
		\begin{equation*}
			\begin{split}
				\hat u_n(x,y)
				&=\int_0^{y}(\partial_{y}\hat u_n)(x,t)dt=\int_0^{y} x^{-\frac{\alpha}{2}}\left(x^{\frac{\alpha}{2}}\partial_y\hat u_n\right)(x,t)dt.
			\end{split}
		\end{equation*}
		Hence
		\begin{equation}\label{4.2.27}
			\begin{split}
				\int_0^1|\hat u_n(s,y)| ds
				&\leq\int_0^1 \left|\int_0^y s^{-\frac{\alpha}{2}} s^{\frac{\alpha}{2}}\partial_y\hat u_n(s,t)dt\right|ds\\
				&\leq \|x^{-\alpha}\|_{L^1(0,1)}^{\frac{1}{2}} \|x^{\frac{\alpha}{2}}\partial_y\hat u_n\|_{L^2(\Omega)}y^{\frac{1}{2}}.
			\end{split}
		\end{equation}
		It further follows that for any $j\in\mathbb{N}$,
		\begin{equation*}
			\begin{split}
				\iint_{(0,1)\times (0,j^{-1})}|\hat u_n(x,y)|dxdy
				&\leq \|x^{-\alpha}\|_{L^1(0,1)}^{\frac{1}{2}}\|x^{\frac{\alpha}{2}}\partial_y\hat u_n\|_{L^2(\Omega)}\int_0^{j^{-1}} y^{\frac{1}{2}} dy\\
				&=\frac{1}{1+\frac{1}{2}}\|x^{-\alpha}\|_{L^1(0,1)}^{\frac{1}{2}}\|x^{\frac{\alpha}{2}}\partial_{y}\hat u_n\|_{L^2(\Omega)}j^{-(\frac{1}{2}+1)}.
			\end{split}
		\end{equation*}
		Similar to the above argument, we can obtain
		\begin{equation*}
			\begin{split}
				\iint_{(0,1)\times(1-j^{-1}, 1)}|\hat u_n(x,y)|dxdy
				&\leq \frac{1}{1+\frac{1}{2}}\|x^{-\alpha}\|_{L^1(0,1)}^{\frac{1}{2}}\|x^{\frac{\alpha}{2}}\partial_{y}\hat u_n\|_{L^2(\Omega)} \left(1-(1-\frac{1}{j})^{(\frac{1}{2}+1)}\right),\\
				\iint_{(0,j^{-1})\times (0,1)}|\hat u_n(x,y)|dxdy
				&\leq\frac{1}{1+\frac{1}{2}}\|\partial_{x}\hat u_n\|_{L^2(\Omega)}j^{-(\frac{1}{2}+1)},\\
				\iint_{(1-j^{-1},1)\times (0,1)}|\hat u_n(x,y)|dxdy
				&\leq\frac{1}{1+\frac{1}{2}}\|\partial_{x}\hat u_n\|_{L^2(\Omega)}\left(1-(1-\frac{1}{j})^{(\frac{1}{2}+1)}\right).
			\end{split}
		\end{equation*}
		Denote
		\begin{equation*}
			\Omega_j=\{(x,y)\in \Omega\mid x,y\in (0,j^{-1})\cup(1-j^{-1}, 1)\}, \ j\in\mathbb{N}.
		\end{equation*}
		Then
		\begin{equation*}
			\int_{\Omega_j}|\hat u_n(z)|dz\leq C\|\hat u_n\|_{W_0^{1,1}(\Omega,x^{\alpha})}\max\left\{j^{-(\frac{1}{2}+1)},\left(1-(1-\frac{1}{j})^{(\frac{1}{2}+1)}\right)\right\},
		\end{equation*}
		where $C>0$ is a constant depending only on $x^{\alpha}$. This implies that there exists  $j_0\in\mathbb{N}$ such that when  $j\geq j_0$, it holds
		\begin{equation*}
			\int_{\Omega_j}|\hat u_n(z)|dz<\epsilon.
		\end{equation*}
		Now let $G=\Omega-\bar \Omega_{j_0}$, we have proved that
		\begin{equation}\label{4.2.28}
			\int_{\Omega-\bar G}|\hat u_n(z)|dz<\epsilon.	
		\end{equation}
		
		Let $\xi=(\xi_1,\xi_2)\in\mathbb{R}^2$, without loss of generality, we assume $\xi_1>0, \xi_2>0$. Noting that
		\begin{equation*}
			\begin{split}
				|\hat u_n(x+\xi_1,y+\xi_2)-\hat u_n(x,y+\xi_2)|
				&=\left|\int_{x}^{x+\xi_1}\partial_{x}\hat u_n(s,{y}+\xi_2)ds\right|\\
				&\leq \xi_1^{\frac{1}{2}}\left(\int_{x}^{x+\xi_1}\left|\partial_{x}\hat u_n(s,{y}+\xi_2)\right|^2 ds\right)^\frac{1}{2}\\
				&\leq \xi_1^{\frac{1}{2}}\left(\int_{\mathbb{R}}\left|\partial_{x}\hat u_n(s,{y}+\xi_2)\right|^2 ds\right)^\frac{1}{2},
			\end{split}
		\end{equation*}
		we have
		\begin{equation*}
			\int_{\mathbb{R}}|\hat u_n(x+\xi_1,{y}+\xi_2)-\hat u_n(x,{y}+\xi_2)|dy\leq \xi_1^{\frac{1}{2}}\|\partial_{x}\hat u_n\|_{L^2(\Omega)}.
		\end{equation*}
		Hence
		\begin{equation}\label{4.2.buchong}
			\iint_{\mathbb{R}^2}|\hat u_n(x+\xi_1,{y}+\xi_2)-\hat u_n(x,{y}+\xi_2)|dxdy\leq \xi_1^{\frac{1}{2}}\|\partial_{x}\hat u_n\|_{L^2(\Omega)}.
		\end{equation}
		Similarly, we obtain that
		\begin{equation*}
			\int_{\mathbb{R}}|\hat u_n(x,{y}+\xi_2)-\hat u_n(x,{y})|dx\leq \xi_2^\frac{1}{2}\|x^{-\alpha}\|_{L^1(0,1)}^\frac{1}{2}\|x^{\frac{\alpha}{2}}\partial_{y}\hat u_n\|_{L^2(\Omega)}.
		\end{equation*}
		Thus
		\begin{equation}\label{4.2.29}
			\iint_{\mathbb{R}^2}|\hat u_n(x,{y}+\xi_2)-\hat u_n(x,{y})|dxdy\leq \xi_2^\frac{1}{2}\|x^{-\alpha}\|_{L^1(0,1)}^\frac{1}{2}\|x^{\frac{\alpha}{2}}\partial_{y}\hat u_n\|_{L^2(\Omega)}.
		\end{equation}
		From
		\begin{equation*}
			\begin{split}
				|\hat u_n(z+\xi)-\hat u_n(z)|
				&\leq |\hat u_n(x+\xi_1,{y}+\xi_2)-\hat u_n(x, {y}+\xi_2)|+|\hat u_n(x,{y}+\xi_2)-\hat u_n(x,{y})|
			\end{split}
		\end{equation*}
		and \eqref{4.2.buchong}, \eqref{4.2.29}, we have
		\begin{equation*}
			\int_{\mathbb{R}^2}|\hat u_n(z+\xi)-\hat u_n(z)|dz\leq \xi_1^{\frac{1}{2}}\|\partial_{x}\hat u_n\|_{L^2(\Omega)}+\xi_2^\frac{1}{2}\|x^{-\alpha}\|_{L^1(0,1)}^\frac{1}{2}\|x^{\frac{\alpha}{2}}\partial_{y}\hat u_n\|_{L^2(\Omega)}.
		\end{equation*}
		Thus, when $|\xi|=\sqrt{\xi_1^2+\xi_2^2}$ is sufficiently small, we have
		\begin{equation}\label{4.2.30}
			\int_{\mathbb{R}^2}|\hat u_n(z+\xi)-\hat u_n(z)|dz<\epsilon.	
		\end{equation}
		Combining inequalities \eqref{4.2.26}, \eqref{4.2.28} and \eqref{4.2.30},  we get
		\begin{equation*}
			\begin{split}
				&\iint_{\mathbb{R}^2}|\tilde u_n(x+\xi_1,{y}+\xi_2)-\tilde u_n(x,{y})|dxdy \\
				\leq& \iint_{\mathbb{R}^2}|\tilde u_n(x+\xi_1,{y}+\xi_2)-\hat u_n(x+\xi_1,{y}+\xi_2)|dxdy\\
				&+\iint_{\mathbb{R}^2}|\hat u_n(x+\xi_1,{y}+\xi_2)-\hat u_n(x,y)|dxdy
				+\iint_{\mathbb{R}^2}|\hat u_n(x,{y})-\tilde u_n(x,{y})|dxdy\\
				\leq& 2\|\tilde u_n-\hat u_n\|_{L^2(\mathbb{R}^2)}+\iint_{\mathbb{R}^2}|\hat u_n(x+\xi_1,{y}+\xi_2)-\hat u_n(x,{y})|dxdy\\
				\leq& 3\epsilon,
			\end{split}
		\end{equation*}
		and
		\begin{equation*}
			\begin{split}
				\iint_{\Omega-\bar G}|\tilde u_n(x, {y})|dxdy
				\leq& \iint_{\Omega-\bar G}|\tilde u_n(x,{y})-\hat u_n(x,{y})|dxdy\\
				&+\iint_{\Omega-\bar G} |\hat u_n(x,{y})|dxdy\leq 2\epsilon.
			\end{split}
		\end{equation*}
		So far, the inequalities  \eqref{4.2.25} are satisfied, so the sequence $\{\tilde u_n\}_{n\in\mathbb{N}}$ is precompact in $L^1(\Omega)$, and $\{u_n\}_{n\in\mathbb{N}}$ is the Cauchy sequence in $L^1(\Omega)$.
		
		{\it Step 2.} We claim that $$W_{0}^{1,1}(\Omega;x^\alpha)\hookrightarrow L^1(\Omega) ~\text{is compact}~.$$ 
		Indeed, by Lemma \ref{neicha4.2}, for any $4\ge q>2$, $\{u_n\}_{n\in\mathbb{N}}$ is bounded in $L^q(\Omega)$.  By interpolation inequality, we have 
		\begin{equation}\label{4.2.31}
			\|u_n-u_m\|_{L^2(\Omega)}\le \|u_n-u_m\|_{L^1(\Omega)}^\gamma \|u_n-u_m\|_{L^q(\Omega)}^{1-\gamma}\le (2C)^{1-\gamma}\|u_n-u_m\|_{L^1(\Omega)}^{\gamma}
		\end{equation}
		for some $\gamma \in (0,1)$. From Step 1, we know that $\{u_n\}_{n\in\mathbb{N}}$ is Cauchy sequence in $L^1(\Omega)$. So when $n,m$ is sufficiently large, combined with \eqref{4.2.31} we know that $\{u_n\}_{n\in\mathbb{N}}$ is the Cauchy sequence in $L^2(\Omega)$.
		Thus we have completed the proof.
	\end{proof}
	
	Next, we will prove Theorem~\ref{Intro-3}. Its proof needs the Kakutani fixed point theorem
	quoted from \cite{charalambos2013infinite}.
	\begin{lemma}\label{Intro-5}  Let $S$ be a nonempty, compact and convex subset of
		a locally convex Hausdorff space $X$. Let $\Phi: S\mapsto 2^S$ (where $2^S$ denotes the set consisting of
		all subsets of $S$)
		be a set-valued function
		satisfying:
		\begin{itemize}
			\item[(i)] For each $s\in S$, $\Phi(s)$ is a nonempty and convex subset;
			\item[(ii)] Graph $\Phi:=\{(s,z): s\in S\;\mbox{and}\;z\in \Phi(s)\}$ is closed.
		\end{itemize}
		Then the set of fixed points of $\Phi$ is nonempty and compact, where $s^*\in S$ is called to
		be a \emph{fixed point} of $\Phi$ if $s^*\in \Phi(s^*)$.
	\end{lemma}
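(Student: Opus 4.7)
The plan is to deduce this statement, classically known as the Kakutani--Fan--Glicksberg fixed-point theorem, by reducing to the finite-dimensional Kakutani theorem, which in turn rests on Brouwer's fixed-point theorem.

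The easy half is compactness of the fixed-point set. Let $F=\{s\in S : s\in \Phi(s)\}$; then $F$ is the image, under the first projection, of $\Delta_S\cap \mathrm{Graph}\,\Phi$, where $\Delta_S=\{(s,s):s\in S\}$. The diagonal $\Delta_S$ is closed in $S\times S$ because $X$ is Hausdorff, and $\mathrm{Graph}\,\Phi$ is closed by hypothesis (ii). Hence $\Delta_S\cap\mathrm{Graph}\,\Phi$ is closed in the compact set $S\times S$, and $F$ is a closed subset of the compact set $S$, therefore compact. So the substantive task is to show $F\neq\emptyset$.

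For nonemptiness I would run a finite-dimensional approximation. Fix a base $\mathcal{B}$ of open, convex, symmetric neighborhoods of $0$ in $X$. For each $U\in\mathcal{B}$, use compactness of $S$ to pick a finite set $\{s_1,\ldots,s_n\}\subset S$ with $S\subset\bigcup_i(s_i+U)$, set $S_U=\mathrm{conv}\{s_1,\ldots,s_n\}$, and construct a continuous map $\pi_U:S\to S_U$ from a partition of unity subordinate to the cover, arranged so that $\pi_U(s)-s\in U$ for every $s\in S$. Define $\Phi_U:S_U\to 2^{S_U}$ by $\Phi_U(s)=\pi_U(\Phi(s))$. Since $\pi_U$ is continuous and affine on simplicial pieces, it preserves convexity, so $\Phi_U(s)$ is nonempty and convex; from (i)--(ii) together with compactness of $S$ one deduces upper semicontinuity of $\Phi$ with compact values, which survives composition with $\pi_U$ and gives $\Phi_U$ a closed graph. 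The finite-dimensional Kakutani theorem on the compact convex set $S_U$ then produces $s_U\in \Phi_U(s_U)$, i.e.\ some $z_U\in\Phi(s_U)$ with $z_U-s_U\in U$.

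Finally I would pass $s_U\to s^*$ along a subnet by compactness of $S$; since $z_U-s_U\in U$ and the $U$'s form a neighborhood base at $0$, the same subnet of $z_U$ also converges to $s^*$. Because $(s_U,z_U)\in\mathrm{Graph}\,\Phi$ and the graph is closed, $(s^*,s^*)\in\mathrm{Graph}\,\Phi$, yielding $s^*\in\Phi(s^*)$. The main obstacle is the approximation step: one must carefully verify that $\Phi_U$ inherits the Kakutani hypotheses from $\Phi$, which hinges on upgrading the abstract ``closed graph on a compact set'' condition to genuine upper semicontinuity with compact values, and then on checking that this property is preserved by the continuous projection $\pi_U$. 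The underlying finite-dimensional Kakutani theorem itself is obtained by approximating the set-valued map by continuous single-valued selections on successively finer simplicial subdivisions of $S_U$ and invoking Brouwer's fixed-point theorem on each approximation.
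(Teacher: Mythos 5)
The paper does not actually prove this lemma: it is the Kakutani--Fan--Glicksberg fixed-point theorem, quoted directly from the reference \cite{charalambos2013infinite}, so there is no in-paper argument to compare yours against. Your outline is the standard proof of that theorem. The compactness half is correct ($F$ is the preimage of the closed graph under the continuous diagonal map $s\mapsto(s,s)$, hence closed in the compact set $S$), as is the final limiting step (a subnet along which $s_U\to s^*$ forces $z_U\to s^*$ as well, and closedness of the graph gives $s^*\in\Phi(s^*)$), and the upgrade from ``closed graph into a compact Hausdorff space'' to upper semicontinuity with compact values is legitimate.

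There is, however, one genuine flaw in the finite-dimensional approximation step. The Schauder-type projection $\pi_U(s)=\sum_i\lambda_i(s)\,s_i$ built from a partition of unity is continuous but not affine, and a continuous, merely piecewise-affine map need not carry convex sets to convex sets; so the convexity of $\Phi_U(s)=\pi_U(\Phi(s))$ does not follow from ``$\pi_U$ is affine on simplicial pieces,'' and without convex values the finite-dimensional Kakutani theorem cannot be invoked. Two standard repairs: (a) replace $\Phi_U(s)$ by $\mathrm{conv}\bigl(\pi_U(\Phi(s))\bigr)$, which still lies in $S_U$, is compact-valued with closed graph in finite dimensions (Carath\'eodory), and whose fixed point $s_U=\sum_j\mu_j\,\pi_U(z_j)$ with $z_j\in\Phi(s_U)$ still yields a single $z_U:=\sum_j\mu_j z_j\in\Phi(s_U)$ (convexity of $\Phi(s_U)$) with $s_U-z_U=\sum_j\mu_j\bigl(\pi_U(z_j)-z_j\bigr)\in U$ (convexity of $U$); or (b) bypass $\pi_U$ entirely and apply finite-dimensional Kakutani to the correspondence $s\mapsto(\Phi(s)+U)\cap S_U$ on $S_U$, whose values are nonempty and convex for free. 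With either fix your argument goes through; as written, the convexity of $\Phi_U(s)$ is unjustified.
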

	Now, we are position to prove Theorem ~\ref{Intro-3}.
	
	\begin{proof}[\textbf{Proof of Theorem~\ref{Intro-3}}]
		We first introduce  three set-valued functions
		$\Phi_1: {\mathcal{U}_1}\mapsto 2^{{\mathcal{U}_2}}, \Phi_2: {\mathcal{U}_2}\mapsto 2^{{\mathcal{U}_1}}$
		and $\Phi: {\mathcal{U}_1}\times {\mathcal{U}_2}\mapsto 2^{{\mathcal{U}_1}\times {\mathcal{U}_2}}$
		as follows:
		\begin{equation}\label{result-1}
			\Phi_1 f_1:= \{f_2\in {\mathcal U}_2: J_2(f_1,f_2)\leq J_2(f_1,v_2)\;\;
			\mbox{for all}\;\;v_2\in {\mathcal U}_2\},\;\;f_1\in {\mathcal U}_1,
		\end{equation}
		\begin{equation}\label{result-2}
			\Phi_2 f_2:= \{f_1\in {\mathcal U}_1: J_1(f_1,f_2)\leq J_1(v_1,f_2)\;\;
			\mbox{for all}\;\;v_1\in {\mathcal U}_1\},\;\;f_2\in {\mathcal U}_2,
		\end{equation}
		and
		\begin{equation}\label{result-3}
			\Phi(f_1,f_2):= \{({\widetilde f}_1,{\widetilde f}_2): {\widetilde f}_1\in \Phi_2 f_2\;\;\mbox{and}\;\;
			{\widetilde f}_2\in \Phi_1 f_1\},\;\;(f_1,f_2)\in {\mathcal U}_1\times {\mathcal U}_2.
		\end{equation}
		Then we set
		\begin{equation*}
			X:= (L^2(\Omega))^2\;\;\mbox{and}\;\;S:= {\mathcal U}_1\times {\mathcal U}_2.
		\end{equation*}
		It is clear that $X$ is a locally convex Hausdorff space.
		The rest of the proof will be carried out by the following four steps.
		
		{\it Step 1. Show that $S$ is a nonempty, compact and convex subset of $X$.}
		
		This fact can be easily checked. We omit the proofs here.
		
		{\it Step 2. $\Phi(f_1,f_2)$ is nonempty.} We prove that $\Phi(f_1,f_2)$ is nonempty for each $(f_1,f_2)\in S$.
		
		We arbitrarily fix $(f_1,f_2)\in S$. According to (\ref{result-1})-(\ref{result-3}), it suffices to
		show that $\Phi_1 f_1$ and $\Phi_2 f_2$ are nonempty. For this purpose, we introduce the following
		auxiliary optimal control problem:
		\begin{equation*}
			{\rm\bf (P_{au})}\;\;\;\;\;\;\;\inf_{v_2\in {\mathcal U}_2} J_2(f_1,v_2).
		\end{equation*}
		Let
		\begin{equation}\label{result-4}
			d:=\inf_{v_2\in {\mathcal U}_2} J_2(f_1,v_2).
		\end{equation}
		It is obvious that $d\geq 0$. Let $\{v_{2,n}\}_{n\geq 1}\subseteq {\mathcal U}_2$ be a minimizing sequence
		so that
		\begin{equation}\label{result-5}
			d=\lim_{n\rightarrow \infty} J_2(f_1,v_{2,n}).
		\end{equation}
		On one hand, since $\|v_{2,n}\|_{L^2(\Omega;x^{-\alpha})}\leq M_2$,
		there exists a subsequence of $\{v_{2,n}\}_{n\geq 1}$, still denoted by itself, and
		$v_{2,0}\in {\mathcal U}_2$, so that
		\begin{equation}\label{result-5:1}
			v_{2,n}\rightarrow v_{2,0}\;\;\mbox{weakly in}\;\;L^2(\Omega;x^{-\alpha}).
		\end{equation}
		
		We denote that $\bar{u}_n:=u(g,f_1,v_{2,n})-u(g,f_1,v_{2,0})$. According to (\ref{1.1}), it is clear that
		\begin{equation}
			\label{result-5:2}
			\begin{cases}
				-\frac{1}{2}\partial_{xx}\bar{u}_n+x^{\frac{\alpha}{2}}\partial_{y}\bar{u}_n=\chi_{\omega_2}(v_{2,n}-v_{2,0}), & \mbox{in}\,\, \Omega,\\
				\bar{u}_n=0, & \mbox{on}\,\, \partial \Omega.
			\end{cases}
		\end{equation}
		By \eqref{ine44}, we have 
		$$
		\|\bar{u}_n\|_{W^{1,1}(\Omega;x^{\alpha})}\leq C\|\chi_{\omega_2}(v_{2,n}-v_{2,0})\|_{L^2(\Omega;x^{-\alpha})},
		$$
		which along with (\ref{result-5:1})
		, implies that there exists a subsequence of $\bar{u}_n$, still denoted by itself, and $\bar{u}\in W^{1,1}(\Omega;x^{\alpha})$ such that
		\begin{align}
			\label{result-5:7}
			\bar{u}_n\rightarrow \bar{u}\;\;\mbox{weakly in}\;\;W^{1,1}(\Omega;x^{\alpha}).
		\end{align}
		By Proposition \ref{compactembed}, the embedding $W^{1,1}(\Omega;x^{\alpha})\hookrightarrow L^2(\Omega)$ is compact, which implies that
		\begin{align}
			\label{result-5:73}
			\bar{u}_n\rightarrow \bar{u}\;\;\mbox{strongly in}\;\;L^2(\Omega).
		\end{align}
		Passing to the limit for $n\rightarrow\infty$ in (\ref{result-5:2}), by (\ref{result-5:1}) and (\ref{result-5:7}), we have
		\begin{equation*}
			\begin{cases}
				-\frac{1}{2}\partial_{xx}\bar{u}+x^{\alpha}\partial_{y}\bar{u}=0, & \mbox{in}\,\, \Omega,\\
				\bar{u}_n=0, & \mbox{on}\,\, \partial \Omega,
			\end{cases}
		\end{equation*}
		from which we obtain that $\bar{u} = 0$. Hence, we have
		\begin{equation}\label{result-6}
			u(g,f_1,v_{2,n})\rightarrow u(g,f_1,v_{2,0}) ~\text{strongly in}~ L^2(\Omega).
		\end{equation}
		
		On the other hand, from (\ref{result-5:1}), we have
		\begin{equation}\label{9}
			\|v_{2,0}\|_{L^2(\Omega;x^{-\alpha})}\leq\liminf_{n\rightarrow\infty} \|v_{2,n}\|_{L^2(\Omega;x^{-\alpha})}.
		\end{equation}
		Now for any $\varphi\in C_0^\infty(\Omega)$, multiplying the first equation of the following equation by $\varphi$ and integrating over $\Omega$:
		\begin{equation*}
			\begin{cases}
				-\frac{1}{2}\partial_{xx}u+x^{\alpha}\partial_{y}u=\chi_{\omega}g+\chi_{\omega_1}f_1+\chi_{\omega_2}v_{2,n}, & \mbox{in}\,\, \Omega,\\
				u=0, & \mbox{on}\,\, \partial \Omega,
			\end{cases}
		\end{equation*}
		and by (\ref{result-5:1}), it implies that
		\begin{equation*}
			\begin{cases}
				-\frac{1}{2}\partial_{xx}u+x^{\alpha}\partial_{y}u=\chi_{\omega}g+\chi_{\omega_1}f_1+\chi_{\omega_2}v_{2,0}, & \mbox{in}\,\, \Omega,\\
				u=0, & \mbox{on}\,\, \partial \Omega.
			\end{cases}
		\end{equation*}
		Therefore, it follows from (\ref{result-5}), (\ref{Intro-2}),  (\ref{result-6}) and (\ref{9}) that
		\begin{equation}\label{result-7}
			d=J_2(f_1,v_{2,0}).
		\end{equation}
		Noting that $v_{2,0}\in {\mathcal U}_2$, by (\ref{result-4}), (\ref{result-7}) and (\ref{result-1}),
		we obtain that $v_{2,0}\in \Phi_1 f_1$. This implies that $\Phi_1 f_1\not=\emptyset$. In the same way,
		we also have that $\Phi_2 f_2\not=\emptyset$.
		
		{\it Step 3. Convex subset $\Phi(f_1,f_2)$.} We show that $\Phi(f_1,f_2)$ is a convex subset of ${\mathcal U}_1\times {\mathcal U}_2$ for
		each $(f_1,f_2)\in {\mathcal U}_1\times {\mathcal U}_2$.
		
		We arbitrarily fix $(f_1,f_2)\in {\mathcal U}_1\times {\mathcal U}_2$.
		According to (\ref{result-1})-(\ref{result-3}), it suffices to prove that
		$\Phi_1 f_1$ is a  convex subset of ${\mathcal U}_2$. The convexity of $\Phi_2 f_2$ can
		be similarly proved. For this purpose, we arbitrarily fix ${\widetilde f}_2, {\widehat f}_2\in \Phi_1 f_1$.
		By (\ref{result-1}),
		we get that
		\begin{equation}\label{result-8}
			{\widetilde f}_2,\;\;{\widehat f}_2\in {\mathcal U}_2,
		\end{equation}
		and
		\begin{equation}\label{result-9}
			J_2(f_1,{\widetilde f}_2)\leq J_2(f_1,v_2)\;\;\mbox{and}\;\;
			J_2(f_1,{\widehat f}_2)\leq J_2(f_1,v_2)\;\;\mbox{for each}\;\;v_2\in {\mathcal U}_2.
		\end{equation}
		For any $\lambda\in [0,1]$, by (\ref{Intro-2}), we have that
		\begin{eqnarray*}
			&&J_2(f_1,\lambda {\widetilde f}_2+(1-\lambda){\widehat f}_2)-[\lambda J_2(f_1,{\widetilde f}_2)+(1-\lambda)J_2(f_1,{\widehat f}_2)]\\[3mm]
			&\leq&\|u(g,f_1,\lambda {\widetilde f}_2+(1-\lambda){\widehat f}_2)-y^2_d\|^2_{L^2(G_2)}-\lambda\|u(g,f_1,{\widetilde f}_2)-y^2_d\|^2_{L^2(G_2)}\\[3mm]
			&&-(1-\lambda)\|u(T;y_0,u_1,{\widehat u}_2)-y^2_T\|^2_{L^2(G_2)}\\[3mm]
			&=&\|\lambda [u(g,f_1,{\widetilde f}_2)-y^2_d]+(1-\lambda)[u(g,f_1,{\widehat f}_2)-y^2_d]\|^2_{L^2(G_2)}\\[3mm]
			&&-\lambda\|u(g,f_1,{\widetilde f}_2)-y^2_d\|^2_{L^2(G_2)}-(1-\lambda)\|u(g,f_1,{\widehat f}_2)-y^2_d\|^2_{L^2(G_2)}\\[3mm]
			&\leq&0.
		\end{eqnarray*}
		This, along with (\ref{result-8}) and (\ref{result-9}), yields that
		\begin{equation*}
			\lambda {\widetilde f}_2+(1-\lambda){\widehat f}_2\in {\mathcal U}_2
		\end{equation*}
		and
		\begin{equation*}
			J_2(f_1,\lambda{\widetilde f}_2+(1-\lambda){\widehat f}_2)\leq J_2(f_1,v_2)\;\;\mbox{for each}\;\;v_2\in {\mathcal U}_2,
		\end{equation*}
		which indicate that $\lambda{\widetilde f}_2+(1-\lambda){\widehat f}_2\in \Phi_1 f_1$ (see (\ref{result-1})). Hence, $\Phi_1 f_1$
		is a convex subset of ${\mathcal U}_2$.
		
		{\it Step 4. Prove that Graph $\Phi$ is closed.}
		
		It suffices to show that if $(f_{n,1},f_{n,2})\in {\mathcal U}_1\times {\mathcal U}_2,
		{\widetilde f}_{n,1}\in \Phi_2 f_{n,2}$, ${\widetilde f}_{n,2}\in \Phi_1 f_{n,1}$,
		$(f_{n,1},f_{n,2})\rightarrow (f_1,f_2)$ in $X$
		and $({\widetilde f}_{n,1},{\widetilde f}_{n,2})\rightarrow ({\widetilde f}_1,{\widetilde f}_2)$  in
		$X$, then
		\begin{equation}\label{20180208-3}
			(f_1,f_2)\in {\mathcal U}_1\times {\mathcal U}_2,\; {\widetilde f}_1\in \Phi_2 f_2\;\;
			\mbox{and}\;\;{\widetilde f}_2\in \Phi_1 f_1.
		\end{equation}
		Indeed, on one hand, by (\ref{result-1}) and (\ref{result-2}), we can easily check that
		\begin{equation}\label{result-10}
			(f_1,f_2)\in {\mathcal U}_1\times {\mathcal U}_2,\; {\widetilde f}_1\in {\mathcal U}_1\;\;
			\mbox{and}\;\;{\widetilde f}_2\in {\mathcal U}_2.
		\end{equation}
		On the other hand, according to ${\widetilde f}_{n,1}\in \Phi_2 f_{n,2}$, (\ref{result-2}) and (\ref{Intro-2}), it is obvious that for each $v_1\in {\mathcal U}_1$,
		\begin{equation}\label{result-11}
			\|u(g,{\widetilde f}_{n,1},f_{n,2})-y^1_d\|_{L^2(G_1)}\leq \|u(g,v_1,f_{n,2})-y^1_d\|_{L^2(G_1)}.
		\end{equation}
		Since $({\widetilde f}_{n,1},f_{n,2})\rightarrow ({\widetilde f}_1,f_2)$ weakly in $(L^2(\Omega;x^{-\alpha}))^2$, by similar arguments as
		those to get (\ref{result-6}), there exists a subsequence of $\{n\}_{n\geq1}$, still denoted by itself, so that
		\begin{equation*}
			\begin{array}{lll}
				&(u(g,{\widetilde f}_{n,1},f_{n,2}),u(g,v_1,f_{n,2}))\\[3mm]
				& \rightarrow (u(g,{\widetilde f}_1,f_2),u(g,v_1,f_2))
				~\text{strongly in}~  (L^2(\Omega))^2,
			\end{array}
		\end{equation*}
		which, implies that
		\begin{equation}\label{result-12}
			\begin{split}
				&(u(g,{\widetilde f}_{n,1},f_{n,2}),u(g,v_1,f_{n,2}))\\[3mm]
				& \rightarrow (u(g,{\widetilde f}_1,f_2),u(g,v_1,f_2))
				~\text{strongly in}~  (L^2(G_1))^2.
			\end{split}
		\end{equation}
		Passing to the limit for $n\rightarrow \infty$ in (\ref{result-11}), by (\ref{result-12}),
		we get that for each $v_1\in {\mathcal U}_1$,
		\begin{equation*}
			\|u(g,{\widetilde f}_1,f_2)-y_d^1\|_{L^2(G_1)}\leq \|u(g,v_1,f_2)-y_d^1\|_{L^2(G_1)}.
		\end{equation*}
		This, together with (\ref{Intro-2}), (\ref{result-2}) and the second conclusion in (\ref{result-10}), implies that
		${\widetilde f}_1\in \Phi_2 f_2$. Similarly, ${\widetilde f}_2\in \Phi_1 f_1$. Hence, (\ref{20180208-3}) follows.
		
		{\it Step 5. Finish the proof.}
		
		According to Steps 1-4 and Lemma~\ref{Intro-5}, there exists a pair of $(f_1^*,f_2^*)\in {\mathcal U}_1\times {\mathcal U}_2$
		so that $(f_1^*,f_2^*)\in \Phi(f_1^*,f_2^*)$,
		which, combined with (\ref{result-1})-(\ref{result-3}), indicates that
		$(f_1^*,f_2^*)$ is a Stackelberg-Nash equilibrium of the problem {\bf(P)}.
		
		In summary, we end the proof of Theorem~\ref{Intro-3}.
	\end{proof}

	\section{Acknowledgement}
	
	This work is supported by the National Natural Science Foundation of China, the Science-Technology Foundation of Hunan Province.

	\bibliographystyle{abbrvnat}
	\bibliography{ref.bib}
	%
	%
	%
\end{document}